\newtheoremstyle{exercise} 
  {3pt} 
  {3pt} 
  {\small\rmfamily} 
  {
} 
  {\rmfamily\scshape} 
  {.} 
  {.5em} 
  {} 
\newtheoremstyle{newplain}
  {5pt}
  {5pt}
  {\itshape}
  {}
  {\rmfamily\scshape}
  {. ---}
  {.5em}
  {}
\newtheoremstyle{newremark}
  {5pt}
  {5pt}
  {\rmfamily}
  {}
  {\rmfamily\scshape}
  {. ---}
  {.5em}
  {}
\theoremstyle{newplain}
\newtheorem*{Theorem*}{Theorem} 
\theoremstyle{newplain}
\newtheorem{Theorem}{Theorem}
\newtheorem{Lemma}[Theorem]{Lemma}
\newtheorem{Corollary}[Theorem]{Corollary}
\newtheorem{Proposition}[Theorem]{Proposition}
\newtheorem{Conjecture}[Theorem]{Conjecture}
\newtheorem{Definition}[Theorem]{Definition}
\theoremstyle{newremark}
\newtheorem{Empty}[Theorem]{}
\newtheorem{Claim}[Theorem]{Claim}
\newtheorem*{Claim*}{Claim}
\theoremstyle{exercise}
\numberwithin{Theorem}{section}
\numberwithin{Exercise}{section}
\newcommand{\R}{\mathbb{R}} 
\newcommand{\diff}{\mathop{}\mathopen{}\mathrm{d}} 
\newcommand{\calD}{\mathscr{D}}
\newcommand{\calL}{\mathscr{L}}
\newcommand{\bCH}{\mathbf{CH}} 
\newcommand{\bF}{\mathbf{F}}
\newcommand{\bM}{\mathbf{M}}
\newcommand{\bN}{\mathbf{N}}
\newcommand{\bP}{\mathbf{P}}
\newcommand{\rmsch}{\operatorname{sch}}
\DeclareMathOperator{\rmdist}{\mathrm{dist}} 
\DeclareMathOperator{\rmLip}{\operatorname{Lip}} 
\DeclareMathOperator{\rmloc}{\mathrm{loc}}
\DeclareMathOperator{\rmspt}{\operatorname{spt}} 
\newcommand{\hel} {
\hskip2.5pt{\vrule height7pt width.5pt depth0pt}
\hskip-.2pt\vbox{\hrule height.5pt width7pt depth0pt}
\, }
\def\XXint#1#2#3{{%
\setbox0=\hbox{$#1{#2#3}{\int}$}
\vcenter{\hbox{$#2#3$}}\kern-.5\wd0}}
\renewcommand{\leq}{\leqslant}
\renewcommand{\geq}{\geqslant}
\renewcommand{\subset}{\subseteq}
\DeclareMathAlphabet\euscr{U}{eus}{m}{n}
\newcommand{\niceBV}{\euscr{BV}}
\newcommand{\defeq}{\mathrel{\mathop:}=}
\begin{document}



\title{On the exterior product of Hölder differential forms}

\author[Ph. Bouafia]{Philippe Bouafia}

\address{F\'ed\'eration de Math\'ematiques FR3487 \\
  CentraleSup\'elec \\
  3 rue Joliot Curie \\
  91190 Gif-sur-Yvette
}

\email{philippe.bouafia@centralesupelec.fr}

\keywords{Charges, Hölder differential forms, Exterior calculus}

\begin{abstract}
  We introduce a complex of cochains, $\alpha$-fractional charges ($0
  < \alpha \leq 1$), whose regularity is between that of De
  Pauw-Moonens-Pfeffer's charges and that of Whitney's flat
  cochains. We show that $\alpha$-Hölder differential forms and their
  exterior derivative can be realized as $\alpha$-fractional charges,
  and that it is possible to define the exterior product between an
  $\alpha$-fractional and a $\beta$-fractional charge, under the
  condition that $\alpha + \beta > 1$. This construction extends the
  Young integral in arbitrary dimension and codimension.
\end{abstract}

\maketitle
\tableofcontents

\section{Introduction}

The Young integral has become a classical object: given two $\alpha$-
and $\beta$-Hölder continuous functions $f$ and $g$ on the interval
$[0, 1]$, the convergence of the Riemann-Stieltjes integral $\int_0^1
f \diff g$ is assured when $\alpha + \beta > 1$. R. Züst proposed a
higher-dimensional extension in \cite[Section~3]{Zust}, introducing a
Riemann-Stieltjes type integral to handle expressions like
\begin{equation}
\label{eq:zust}
\int_{[0, 1]^d} f \diff g_1 \wedge \cdots \wedge \diff g_d
\end{equation}
where $f, g_1, \dots, g_d$ are Hölder functions of exponents
$\alpha_0, \alpha_1, \dots, \alpha_d$ satisfying $\alpha_0 + \cdots +
\alpha_d > d$. This condition is proven to be sharp, as
counterexamples are provided in the critical case
\cite[3.2]{Zust}. Furthermore, in \cite{Boua}, an extension of the
Züst integral is proposed, allowing the integrator to be a Hölder
charge. This generalization also enabled to define pathwise integrals
with respect to stochastic processes, such as the fractional Brownian
sheet with sufficient regularity \cite{BouaDePa}.

This goal of this article is to define integrals of Hölder
differential forms, such as those depicted in~\eqref{eq:zust}, in any
codimension, allowing the domain of integration to be a normal
current. In continuation of~\cite{Boua}, we are led to consider the
more general formalism of charges in middle dimension, as introduced
by Th. De Pauw, L. Moonens and W. Pfeffer in~\cite{DePaMoonPfef}, to
represent the generalized differential forms Hölder forms must be.  An
$m$-charge is a linear functional on the space of normal currents that
satisfies a certain continuity condition. Equivalently, the
representation theorem of charges \cite[Theorem~6.1]{DePaMoonPfef}
states that $m$-charges are the sums $\omega + \diff \eta$ of a
continuous $m$-form and the weak exterior derivative of an
$(m-1)$-form. The action on normal currents is given by the formula
\[
(\omega + \diff \eta)(T) = \int_{\R^d} \langle \omega(x), \vec{T}(x)
\rangle \diff \|T\|(x) + \int_{\R^d} \langle \eta(x),
\overrightarrow{\partial T}(x) \rangle \diff \| \partial T \|(x).
\]
Of course, there is no meaningful definition of the exterior product
of two general charges. This obstacle, which is essentially of
distributional nature, prevents us from defining integrals as
in~\eqref{eq:zust} in general.

The situation stands in stark contrast to the complex of locally flat
cochains introduced by H. Whitney, see \cite{Whit, FedeReal}. These
cochains are the continuous linear functionals on the space of flat
$m$-chains. The main result of the theory is Wolfe's representation
theorem, which asserts that locally flat $m$-cochains are in
one-to-one correspondence with locally flat $m$-forms:
$L^\infty_{\rmloc}$ differential $m$-forms with $L^\infty_{\rmloc}$
weak exterior derivative. Consequently, this allows for a pointwise
definition of the exterior product of two locally flat cochains.

However, charges do not possess the sufficient regularity, and it is
evident that the Hölder forms appearing in~\eqref{eq:zust} cannot be
assigned a pointwise meaning. Besides, the De~Pauw-Moonens-Pfeffer
representation theorem only allows to define a cup product at the
cohomology level, see \cite[Lemma~8.2]{DePaMoonPfef}. Thus, to develop
a full exterior calculus apparatus for charges, it is necessary to add
some regularity assumptions. We propose the following definition: an
$m$-charge $\omega$ over $\R^d$ is said to be $\alpha$-fractional
whenever, for every compact $K \subset \R^d$, one can find $C_K \geq
0$ such that
\[
|\omega(T)| \leq C_K \bN(T)^{1 - \alpha} \bF(T)^{\alpha}
\]
for every normal current $T$ supported in $K$. This definition is
designed so that $\alpha$-Hölder continuous differential forms and
their exterior derivatives can be represented as fractional
charges. In the zero-codimensional case, it coincides with the Hölder
charges described in~\cite{BouaDePa}. When $\alpha = 1$ we recover the
definition of Whitney's locally flat cochains.

The main accomplishment of this paper is the definition of the
exterior product between an $\alpha$-fractional charge $\omega$ and a
$\beta$-fractional charge $\eta$, under the Young type condition that
$\alpha + \beta > 1$. The resulting charge is also fractional, with
the fractional exponent being $\alpha + \beta - 1$. Our methods are
inspired by tools from harmonic analysis and bear resemblance to the
Fourier approach to integration developed by M. Gubinelli,
P. Imkeller, and N. Perkowski in \cite{GubiImkePerk}. Our principal
tool is a Littlewood-Paley type decomposition result for fractional
charges, that constitutes a generalization (over $\R^d$ only) of the
decomposition of Hölder functions described in
\cite[Appendix~B,~2.6]{Grom}. By introducing the Littlewood-Paley
decompositions of $\omega$ and $\eta$, it is then possible to split
$\omega \wedge \eta$ formally into two paraproducts, the existence of
which is easier to establish.

The paper is structured into six sections, each with self-explanatory
titles. Section~\ref{sec:charges} provides a self-contained
introduction to charges in middle dimension, focusing solely on the
results pertinent to this paper.

\section{Preliminaries}

\begin{Empty}[Notations]
  Throughout the article, the ambient space will be $\R^d$, with $d
  \geq 1$. It is equipped with the Lebesgue outer measure, denoted
  $\calL^d$. A measurable subset of $\R^d$ always refers to a set that
  is Lebesgue-measurable.

  For a function $f \colon X \to E$ defined on a locally closed subset
  $X \subset \R^d$ with values in a normed space $(E, \| \cdot \|)$, a
  subset $Y \subset X$ and $0 < \alpha \leq 1$, we define the extended
  real numbers
  \begin{align*}
    \| f \|_{\infty, Y} & \defeq \sup \left\{ \|f(y)\| : y \in Y
    \right\} \\ \rmLip^\alpha(f; Y) & \defeq \sup \left\{
    \frac{\|f(y_1) - f(y_2)\|}{|y_1 - y_2|^\alpha} : y_1, y_2 \in Y
    \text{ and } y_1 \neq y_2 \right\}
  \end{align*}
  In addition, we write $\| f \|_\infty = \|f\|_{\infty, X}$ and
  $\rmLip^\alpha(f) = \rmLip^\alpha(f; X)$. When $\alpha = 1$, we may
  of course write $\rmLip$ instead of $\rmLip^\alpha$.
  
  The space of $\alpha$-Hölder continuous maps is written
  $\rmLip^\alpha(X; E)$. A function $f \colon X \to E$ is locally
  $\alpha$-Hölder continuous whenever $\rmLip^\alpha(f; K) < \infty$
  for all compact $K \subset X$, and the space of such functions is
  denoted $\rmLip^\alpha_{\rmloc}(X; E)$. When $E = \R$ we abbreviate
  $\rmLip^\alpha(X)$ or $\rmLip^\alpha_{\rmloc}(X)$.

  For a compact subset $K \subset \R^d$ and $\varepsilon > 0$, we
  define the tubular closed neighborhood $K_\varepsilon = \{ x \in
  \R^d : \operatorname{dist}(x, K) \leq \varepsilon \}$. The reader
  will encounter quite frequently the expression $K_1$, which is a
  specific case of this notation.
  
  We will work within the setting of Federer-Fleming's currents. For
  an in-depth exploration of this subject, we refer the reader to
  \cite{Fede}. In this preliminary part, our focus will be on defining
  common notations, highlighting those that deviate from \cite{Fede},
  and revisiting a few definitions.

  The spaces of $m$-vectors and $m$-covectors are $\bigwedge_m \R^d$
  and $\bigwedge^m \R^d$, and they are respectively given the mass and
  comass norm described in \cite[1.8.1]{Fede}. The bracket notation
  $\langle \cdot, \cdot \rangle$ will be reserved for the duality
  between $m$-covectors and $m$-vectors. The canonical basis of $\R^d$
  is $\boldsymbol{e}_1, \dots, \boldsymbol{e}_d$. We write $\Lambda(d,
  m)$ the set of strictly increasing maps $\{1, \dots, d\} \to \{1,
  \dots, m\}$. We introduce the dual bases of $\bigwedge_m \R^d$ and
  $\bigwedge^m \R^d$
  \begin{align*}
    \boldsymbol{e}_I & = \boldsymbol{e}_{i_1} \wedge \cdots \wedge
    \boldsymbol{e}_{i_m} \\ \diff x_I & = \diff x_{i_1} \wedge \cdots
    \wedge \diff x_{i_m}
  \end{align*}
  for $I \in \Lambda(d, m)$.
  
  All our currents will be defined on $\R^d$ and have typically
  dimension $m$, that is, they will belong to $\calD_m(\R^d)$, the
  topological dual of the space $\calD^m(\R^d)$ of compactly supported
  smooth $m$-forms, see \cite[4.1.7]{Fede}. The boundary, the mass and
  the normal mass of $T \in \calD_m(\R^d)$ are $\partial T$, $\bM(T)$
  and $\bN(T)$. If $\omega$ is a smooth $k$-form and $k \leq m$, the
  current $T \hel \omega \in \calD_{m-k}(\R^d)$ is defined by $T \hel
  \omega(\eta) = T(\omega \wedge \eta)$ for $\eta \in
  \calD^{m-k}(\R^d)$. If $\xi \colon \R^d \to \bigwedge_m \R^d$ is a
  locally integrable $m$-vectorfield, $\calL^d \wedge \xi$ is the
  $m$-current that sends $\omega \in \calD^m(\R^d)$ to $\int_{\R^d}
  \langle \omega, \xi \rangle$. Whenever $E \subset \R^d$ is
  measurable, we denote $\llbracket E \rrbracket \in \calD_d(\R^d)$
  the zero-codimensional current defined by $\llbracket E
  \rrbracket(\omega) = \int_E \langle \boldsymbol{e}_1 \wedge \cdots
  \wedge \boldsymbol{e}_d, \omega \rangle$. If $x \in \R^d$, the
  $0$-current $\llbracket x \rrbracket \in \calD_0(\R^d)$ is defined
  by $\llbracket x \rrbracket(\omega) = \omega(x)$.

  We recall that normal currents and flat chains, as defined in
  \cite{Fede} have compact supports. As such, we may at times evaluate
  such currents against smooth forms that are not compactly supported,
  with no warning.

  The spaces of normal $m$-currents and flat $m$-chains are denoted
  $\bN_m(\R^d)$ and $\bF_m(\R^d)$, following customary notation. For
  any subset $X \subset \R^d$, we write
  \begin{align*}
    \bN_m(X) & = \{T \in \bN_m(\R^d) : \rmspt T \subset X \} \\
    \bF_m(X) & = \{T \in \bF_m(\R^d) : \rmspt T \subset X \}
  \end{align*}
  We define the flat norm of a normal current $T \in \bN_m(\R^d)$ in a
  way which departs from Federer's exposure:
  \begin{align*}
    \bF(T) & = \sup \left\{ T(\omega) : \omega \in \calD^m(\R^d)
    \text{ and } \max \left\{ \| \omega \|_\infty, \| \diff \omega
    \|_\infty \right\} \leq 1 \right\} \\ & = \inf \left\{ \bM(S) +
    \bM(T - \partial S) : S \in \bN_{m+1}(\R^d) \right\}.
  \end{align*}
  The proof of the above equality is similar to
  \cite[4.1.12]{Fede}. From the first equality, it is clear that $\bN$
  and $\bM$ are lower semicontinuous with respect to $\bF$. Note that,
  if $T$ is supported in a compact set $K$, the flat norm we just
  defined may differ from
  \begin{align*}
    \bF_K(T) & = \sup \left\{ T(\omega) : \omega \in \calD^m(\R^d)
    \text{ and } \max \left\{ \| \omega \|_{\infty, K}, \| \diff
    \omega \|_{\infty, K} \right\} \leq 1 \right\} \\ & = \inf \left\{
    \bM(S) + \bM(T - \partial S) : S \in \bN_{m+1}(K) \right\}.
  \end{align*}
  However, if $K$ is a $1$-Lipschitz retract of $\R^d$, it is clear
  that $\bF(T) = \bF_K(T)$. Moreover, this assumption implies that
  $\bF_m(K)$ is the $\bF$-closure of $\bN_m(K)$ within
  $\calD_m(\R^d)$.

  Finally, the letter $C$ will refer generally to a constant, that may
  vary from line to line.
\end{Empty}

The construction of charges ultimately relies on the Federer-Fleming's
compactness theorem of normal currents in flat norm. The following
version uses the flat norm $\bF$ (rather than $\bF_K$ as in
\cite[4.2.17(1)]{Fede}). It can be easily deduced from the original
version. Alternatively, it is possible to reproduce the arguments in
the proof of Federer-Fleming, as was done in
\cite[Theorem~4.2]{DePaMoonPfef}.
  
\begin{Theorem}[Compactness]
  Let $K \subset \R^d$ be compact. For all $c \geq 0$, the ball $\{ T
  \in \bN_m(K) : \bN(T) \leq c\}$ is $\bF$-compact.
\end{Theorem}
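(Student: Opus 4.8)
The plan is to deduce this from Federer--Fleming's classical compactness theorem, stated in the localized form \cite[4.2.17(1)]{Fede}: for fixed compact $K$, the set $\{T \in \bN_m(K) : \bN(T) \leq c\}$ is compact in the flat seminorm $\bF_K$. So the only real issue is the passage from $\bF_K$-compactness to $\bF$-compactness. Since $\bF(T) \geq \bF_K(T)$ always (there are more competitor forms $\omega$ in the supremum defining $\bF$, hence the sup is at least as large — or dually, fewer competitors $S$), the identity map from the ball with the $\bF$-topology to the ball with the $\bF_K$-topology is continuous; what one needs is that on this ball the two topologies in fact coincide, and then $\bF_K$-compactness transfers.

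First I would reduce to a retract situation. Choose $R$ large enough that $K$ is contained in the open ball $B(0,R)$, and let $\rho \colon \R^d \to \overline{B}(0,R)$ be the nearest-point projection onto the closed ball, which is $1$-Lipschitz and fixes $K$ pointwise. As recorded in the preliminaries, when $K'$ is a $1$-Lipschitz retract of $\R^d$ one has $\bF = \bF_{K'}$ on currents supported in $K'$; applying this with $K' = \overline{B}(0,R) \supset K$ gives that on $\bN_m(K)$ the seminorm $\bF$ agrees with $\bF_{\overline{B}(0,R)}$. It remains to compare $\bF_{\overline{B}(0,R)}$ with $\bF_K$ on the ball $\{\bN(T) \le c\}$. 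Here I would argue directly with the infimum formulation: given $T \in \bN_m(K)$ and an almost-optimal filling $S \in \bN_{m+1}(\overline{B}(0,R))$ with $\bM(S) + \bM(T - \partial S) \le \bF_{\overline{B}(0,R)}(T) + \veps$, push it forward by a fixed Lipschitz retraction $r \colon \R^d \to K$ (note $K$ compact admits such $r$ with Lipschitz constant $L = L(K)$ — this is the one genuinely nontrivial input, see below). Since $r_\# $ commutes with $\partial$, fixes $T$ (as $\rmspt T \subset K$), and multiplies mass by at most $L^{m+1}$, we get $\bF_K(T) \le L^{m+1}(\bM(S) + \bM(T-\partial S)) \le L^{m+1}(\bF_{\overline{B}(0,R)}(T) + \veps)$, hence $\bF_K \le L^{m+1}\bF$ on $\bN_m(K)$, while trivially $\bF_K \le \bF$. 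Combined with $\bF \ge \bF_K$ this shows $\bF$ and $\bF_K$ are equivalent seminorms on the whole of $\bN_m(K)$, so in particular they induce the same topology on the ball.

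With the equivalence of seminorms in hand the conclusion is immediate: by \cite[4.2.17(1)]{Fede} the ball $\{T \in \bN_m(K) : \bN(T) \le c\}$ is $\bF_K$-compact, and since $\bF$ induces the same topology on it, it is $\bF$-compact. Alternatively — and this is the route hinted at in the excerpt — one can bypass the retraction lemma by simply rerunning the Federer--Fleming argument with $\bF$ in place of $\bF_K$ throughout, exactly as in \cite[Theorem~4.2]{DePaMoonPfef}: the deformation theorem produces polyhedral approximants whose supports lie in a fixed compact neighborhood of $K$, the mass and boundary-mass bounds are uniform, and a diagonal/Arzelà--Ascoli extraction on the (countable, locally finite) polyhedral pieces yields an $\bF$-convergent subsequence whose limit again lies in $\bN_m(K)$ with $\bN \le c$ by lower semicontinuity of $\bN$ with respect to $\bF$ (noted in the preliminaries).

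The main obstacle is the existence of a Lipschitz retraction $r \colon \R^d \to K$ for an arbitrary compact $K$ — such a retraction need not exist (e.g. $K$ a circle), so the clean pushforward argument in the second paragraph is not available in general. The honest fixes are either (i) to observe that for the compactness statement one may replace $K$ by a $1$-Lipschitz retract $K'$ containing it, such as a large closed ball, prove $\bF$-compactness of $\{T \in \bN_m(K') : \bN(T) \le c\}$, and then note that $\bN_m(K)$ is $\bF$-closed in it (supports are preserved under $\bF$-limits since mass is lower semicontinuous), intersecting a compact set with a closed set; or (ii) to adopt the ``reproduce Federer--Fleming'' strategy above, which never needs a retraction onto $K$ itself. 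I would present route (i) as the main line, as it is shortest and self-contained given the preliminaries, and mention (ii) as the alternative already flagged in the text.
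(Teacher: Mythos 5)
The paper itself offers no written proof here; it only points to two routes (deduce from \cite[4.2.17(1)]{Fede}, or rerun the Federer--Fleming argument as in \cite[Theorem~4.2]{DePaMoonPfef}), and your routes (ii) and (i) correspond to these respectively, with (i) spelled out. Your final argument (i) is correct: enlarge $K$ to a closed ball $K'$, use $\bF = \bF_{K'}$ on $\bN_{m}(K')$ since $K'$ is a $1$-Lipschitz retract, invoke Federer's compactness for $K'$, and observe that $\{T \in \bN_m(K) : \bN(T) \leq c\}$ is $\bF$-closed inside the larger ball (the support condition passes to $\bF$-limits --- most directly because $T_n(\omega) \to T(\omega)$ for every $\omega$ supported off $K$, rather than via lower semicontinuity of mass --- and $\bN(T) \leq c$ passes by lower semicontinuity of $\bN$). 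You also correctly identified that a Lipschitz retraction onto an arbitrary compact $K$ need not exist, so your second paragraph is rightly discarded.

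However, your opening comparison of the two flat norms is backwards, and this is what sent you on the detour. In fact $\bF(T) \leq \bF_K(T)$ for $T \in \bN_m(K)$: the forms admissible in the supremum defining $\bF$ must satisfy the sup-norm bound \emph{globally}, hence form a \emph{smaller} family than those admissible for $\bF_K$ (bounded only on $K$); dually, the fillings $S$ in the infimum defining $\bF$ range over all of $\bN_{m+1}(\R^d)$ rather than $\bN_{m+1}(K)$, a larger family, so the infimum is smaller. With the inequality in the correct direction the theorem is a one-liner: the $\bF$-topology on the ball is coarser than the $\bF_K$-topology, and a set compact in a finer topology is compact in every coarser one, so $\bF_K$-compactness from \cite[4.2.17(1)]{Fede} transfers immediately. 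No retraction, no enlargement of $K$, and no equivalence of the two topologies is needed; this is presumably the ``easy deduction'' the paper has in mind. So: your proof stands, but the simplest argument was hiding behind the reversed inequality.
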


\begin{Empty}[Convolution of currents]
  In this article, convolutions will play an important role in
  regularizing charges. First, we need to recall how convolution works
  at the level of currents.  The convolution of a current $T \in
  \calD_m(\R^d)$ with a function $\phi \in C^\infty_c(\R^d)$ is
  defined by
  \[
  (T * \phi)(\omega) = T(\check{\phi} * \omega) \qquad \text{for all }
  \omega \in \calD^m(\R^d)
  \]
  where $\check{\phi}(x) = \phi(-x)$ for all $x \in \R^d$. It is clear
  that $T * \phi \in \calD_m(\R^d)$.

  Throughout the article, we fix a $C^\infty$ function $\Phi \colon
  \R^d \to \R$ with compact support in the closed unit ball of $\R^d$,
  that is nonnegative and such that $\int_{\R^d} \Phi = 1$. For the
  sake of simplicity, we additionally assume that $\Phi$ is even. For
  all $\varepsilon > 0$, we define $\Phi_\varepsilon(x) =
  \varepsilon^{-d} \Phi(\varepsilon^{-1}x)$. Below we compile several
  useful facts concerning $T * \Phi_\varepsilon$, when $T \in
  \bN_m(\R^d)$.
\end{Empty}

\begin{Proposition}
  \label{prop:convT}
  There is $C \geq 0$ such that, for all $T \in \bN_m(\R^d)$ and
  $\varepsilon > 0$,
  \begin{itemize}
    \item[(A)] $\rmspt (T * \Phi_\varepsilon) \subset (\rmspt
      T)_\varepsilon$;
    \item[(B)] $\bM(T * \Phi_\varepsilon) \leq \bM(T)$;
    \item[(C)] $\partial T * \Phi_\varepsilon = \partial (T *
      \Phi_\varepsilon)$, $\bN(T * \Phi_\varepsilon) \leq \bN(T)$ and
      $\bF(T * \Phi_\varepsilon) \leq \bF(T)$;
    \item[(D)] $\bM(T * \Phi_\varepsilon) \leq C
      \varepsilon^{-1} \bF(T)$ and $\bN(T * \Phi_\varepsilon) \leq C
      \varepsilon^{-1} \bF(T)$ if $\varepsilon \leq 1$;
    \item[(E)] $\bF(T - T * \Phi_\varepsilon) \leq \varepsilon
      \bN(T)$.
  \end{itemize}
\end{Proposition}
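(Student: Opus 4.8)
The plan is to reduce every item to the sup- and inf-descriptions of $\bM$, $\bN$ and $\bF$ recalled above, by way of two elementary facts about convolution at the level of currents. Since $\Phi$ is even, $\check{\Phi}_\varepsilon = \Phi_\varepsilon$, so the defining formula reads $(T * \Phi_\varepsilon)(\omega) = T(\Phi_\varepsilon * \omega)$, where $\Phi_\varepsilon * \omega$ is the smooth, compactly supported form obtained by convolving each coefficient of $\omega$ with $\Phi_\varepsilon$; note that $\diff(\Phi_\varepsilon * \omega) = \Phi_\varepsilon * \diff\omega$. Second, writing $\tau_v$ for translation by $v$, i.e. $(\tau_v T)(\omega) = T(\omega(\cdot + v))$, one has the weak identity $T * \Phi_\varepsilon = \int_{\R^d} (\tau_v T)\, \Phi_\varepsilon(v)\, \diff v$, which is just Fubini's theorem together with the linearity of $T$. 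Item (A) is then immediate: if $\rmspt\omega$ is disjoint from $(\rmspt T)_\varepsilon$, then $\rmspt(\Phi_\varepsilon * \omega) \subset \rmspt\omega + \bar{B}(0,\varepsilon)$ is disjoint from $\rmspt T$, so $(T*\Phi_\varepsilon)(\omega) = 0$. For (B), convexity of the comass norm gives $\|(\Phi_\varepsilon * \omega)(x)\| \leq \int \Phi_\varepsilon(y)\,\|\omega(x-y)\|\,\diff y \leq \|\omega\|_\infty$ for every $x$, hence $(T * \Phi_\varepsilon)(\omega) = T(\Phi_\varepsilon * \omega) \leq \bM(T)$ for every $\omega$ with $\|\omega\|_\infty \leq 1$, and one passes to the supremum.

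For (C), unwinding the definitions gives $(\partial(T * \Phi_\varepsilon))(\omega) = (T*\Phi_\varepsilon)(\diff\omega) = T(\Phi_\varepsilon * \diff\omega) = T(\diff(\Phi_\varepsilon * \omega)) = ((\partial T) * \Phi_\varepsilon)(\omega)$, so that $\partial$ commutes with convolution; hence $\bN(T*\Phi_\varepsilon) = \bM(T*\Phi_\varepsilon) + \bM((\partial T)*\Phi_\varepsilon) \leq \bM(T) + \bM(\partial T) = \bN(T)$ by (B) applied to $T$ and to $\partial T$. For $\bF(T*\Phi_\varepsilon) \leq \bF(T)$ I invoke the first (dual) description of $\bF$: given $\omega$ with $\max\{\|\omega\|_\infty, \|\diff\omega\|_\infty\} \leq 1$, the form $\eta := \Phi_\varepsilon * \omega$ satisfies $\|\eta\|_\infty \leq 1$ and $\|\diff\eta\|_\infty = \|\Phi_\varepsilon * \diff\omega\|_\infty \leq 1$, so $(T*\Phi_\varepsilon)(\omega) = T(\eta) \leq \bF(T)$. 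For (D) I run the same computation but estimate $\diff\eta$ by letting the derivatives fall on the mollifier: the scaling $\int_{\R^d} |\nabla\Phi_\varepsilon| = \varepsilon^{-1} \int_{\R^d} |\nabla\Phi|$ yields a constant $C = C(d, m, \Phi) \geq 1$ with $\|\diff\eta\|_\infty \leq C\varepsilon^{-1}\|\omega\|_\infty$, while still $\|\eta\|_\infty \leq \|\omega\|_\infty$. As $\varepsilon \leq 1$, this gives $(T*\Phi_\varepsilon)(\omega) = T(\eta) \leq \bF(T)\,\max\{\|\eta\|_\infty, \|\diff\eta\|_\infty\} \leq C\varepsilon^{-1}\bF(T)\|\omega\|_\infty$, so the supremum over $\|\omega\|_\infty \leq 1$ gives $\bM(T*\Phi_\varepsilon) \leq C\varepsilon^{-1}\bF(T)$. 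For the normal mass, $((\partial T)*\Phi_\varepsilon)(\omega) = T(\diff(\Phi_\varepsilon*\omega)) = T(\zeta)$ with $\zeta := \diff(\Phi_\varepsilon*\omega)$ closed and $\|\zeta\|_\infty \leq C\varepsilon^{-1}\|\omega\|_\infty$, so $T(\zeta) \leq C\varepsilon^{-1}\bF(T)\|\omega\|_\infty$; adding the two estimates and renaming $C$ gives $\bN(T*\Phi_\varepsilon) \leq C\varepsilon^{-1}\bF(T)$.

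Finally, (E) rests on the standard flat-norm estimate $\bF(T - \tau_v T) \leq |v|\,\bN(T)$. To obtain it, apply the homotopy formula to the affine homotopy $h(t,x) = x + tv$, namely $\tau_v T - T = \partial h_\#([0,1] \times T) + h_\#([0,1] \times \partial T)$; since $Dh$ is the identity in the $x$-variables and sends $\partial_t$ to $v$, one has $\bM(h_\#([0,1]\times S)) \leq |v|\,\bM(S)$ for every current $S$, so that, via the inf-description of $\bF$, $\bF(T - \tau_v T) \leq \bM(h_\#([0,1]\times T)) + \bM(h_\#([0,1]\times\partial T)) \leq |v|\,\bN(T)$. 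Combining this with $T - T*\Phi_\varepsilon = \int_{\R^d}(T - \tau_v T)\,\Phi_\varepsilon(v)\,\diff v$ and with the Jensen-type inequality $\bF\big(\int U_v\,\Phi_\varepsilon(v)\,\diff v\big) \leq \int \bF(U_v)\,\Phi_\varepsilon(v)\,\diff v$ — valid because $\bF$ is a supremum of linear functionals and $\Phi_\varepsilon\,\diff v$ is a probability measure — yields $\bF(T - T*\Phi_\varepsilon) \leq \int_{\bar{B}(0,\varepsilon)} |v|\,\bN(T)\,\Phi_\varepsilon(v)\,\diff v \leq \varepsilon\,\bN(T)$. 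The only step that is not routine bookkeeping is the homotopy estimate behind (E); the rest reduces to the definitions and to elementary $L^1$--$L^\infty$ convolution inequalities, so the main thing to keep straight is which of $\bM$, $\bN$, $\bF$ is tested and against which class of forms.
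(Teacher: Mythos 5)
Your proof is correct, and for items (A), (B), (C) and (E) it follows essentially the same route as the paper (for (E), the paper also writes $T*\Phi_\varepsilon$ as an average of translates and uses $\bF(T-\tau_{z\#}T)\leq |z|\,\bN(T)$; you merely make the underlying homotopy-formula estimate explicit, which the paper leaves implicit). The one genuine divergence is item (D). The paper argues on the current side via the inf-description of the flat norm: it picks $S\in\bN_{m+1}(\R^d)$, uses the identity $\partial S=-\sum_k \frac{\partial S}{\partial x_k}\hel \diff x_k$ to get $\bM(\partial S*\Phi_\varepsilon)\leq C\varepsilon^{-1}\bM(S)$, splits $T=(T-\partial S)+\partial S$, and takes the infimum over $S$. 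You instead argue on the form side via the sup-description: you test $T*\Phi_\varepsilon$ against $\omega$ with $\|\omega\|_\infty\leq 1$, let the derivative fall on the mollifier so that $\eta=\Phi_\varepsilon*\omega$ satisfies $\|\diff\eta\|_\infty\leq C\varepsilon^{-1}$, and invoke $T(\eta)\leq\bF(T)\max\{\|\eta\|_\infty,\|\diff\eta\|_\infty\}$; the same device with the closed form $\zeta=\diff(\Phi_\varepsilon*\omega)$ handles $\bM(\partial(T*\Phi_\varepsilon))$. The two arguments are dual to one another and both are valid; yours avoids the restriction-of-currents identity and the infimum over fillings entirely, at the cost of having to track where the hypothesis $\varepsilon\leq 1$ enters (namely in $\max\{1,C\varepsilon^{-1}\}=C\varepsilon^{-1}$), which you do correctly.
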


\begin{proof}
  (A) is immediate.
  
  (B). This is because $\|\omega * \Phi_\varepsilon\|_\infty \leq
  \|\omega\|_\infty$ for all $\omega \in \calD^m(\R^d)$.

  (C). The first part comes from the identity $\diff(\omega *
  \Phi_\varepsilon) = \diff \omega * \Phi_\varepsilon$, valid for any
  $\omega \in \calD^m(\R^d)$. Hence $\bM(\partial(T *
  \Phi_\varepsilon)) = \bM( \partial T * \Phi_\varepsilon) \leq
  \bM(\partial T)$. This easily implies that $\bN(T *
  \Phi_\varepsilon) \leq \bN(T)$ and $\bF(T * \Phi_\varepsilon) \leq
  \bF(T)$.

  (D). Let $S \in \bN_{m+1}(\R^d)$. We apply the identity
  \[
  \partial S = - \sum_{k=1}^d \frac{\partial S}{\partial x_k} \hel
  \diff x_k
  \]
  to the current $S * \Phi_\varepsilon$, which yields
  \[
  \partial(S * \Phi_\varepsilon) = - \sum_{k=1}^d \left(
  S *\frac{\partial \Phi_\varepsilon}{\partial x_k} \right) \hel
  \diff x_k
  \]
  From this, we obtain $\bM(\partial S * \Phi_\varepsilon) \leq C
  \varepsilon^{-1} \bM(T)$.

  Thereafter, we decompose $T = (T - \partial S) + \partial
  S$. Applying (B) and the inequality from the preceding paragraph,
  \[
  \bM(T * \Phi_\varepsilon) \leq \bM \left( (T - \partial S) *
  \Phi_\varepsilon \right) + \bM(\partial S * \Phi_\varepsilon) \leq
  \frac{C}{\varepsilon} \left( \bM(T - \partial S) + \bM(S) \right).
  \]
  Taking the infimum on the right-hand side, as $S$ ranges over
  $\bN_{m+1}(\R^d)$, we obtain the first result.  Then, 
  \[
  \bN(T * \Phi_\varepsilon) = \bM(T * \Phi_\varepsilon) + \bM(\partial
  T * \Phi_\varepsilon) \leq \frac{C}{\varepsilon} \bF(T).
  \]
  
  (E).  For any $z \in \R^d$, let $\tau_z \colon \R^d \to \R^d$ be the
  translation by $z$, and $\tau_{z \#} T$ the pushforward current
  \cite[4.1.7]{Fede}. Let $\omega \in \calD^m(\R^d)$. By
  \cite[4.1.18]{Fede},
  \begin{align*}
    (T - T * \Phi_\varepsilon)(\omega) & = T(\omega) - \int_{\R^d}
    \Phi_\varepsilon(z) (\tau_{z \#} T)(\omega) \diff z \\ & =
    \int_{\R^d} \Phi_\varepsilon(z) \left( T - \tau_{z\#}T
    \right)(\omega) \diff z \\ & \leq \int_{\R^d} \Phi_\varepsilon(z)
    \bF(T - \tau_{z\#}T) \max \{ \|\omega\|_\infty, \|\diff \omega\|_\infty \}
    \diff z \\ & \leq \int_{\R^d} \Phi_{\varepsilon}(z) \lvert z
    \rvert \bN(T) \max \{ \|\omega\|_\infty, \| \diff \omega \|_\infty \} \diff z
    \\ & \leq \varepsilon \bN(T) \max \{ \| \omega \|_\infty, \|\diff
    \omega\|_\infty \}. \qedhere
  \end{align*}
\end{proof}

\section{Charges in middle dimension}
\label{sec:charges}

\begin{Empty}[Charges in middle dimension]
  \label{e:charges}
  In this section, we provide a self-contained introduction to charges
  in middle dimension and establish their fundamental properties. This
  notion was pioneered by De Pauw, Moonens and Pfeffer
  in~\cite{DePaMoonPfef}.

  An \emph{$m$-charge} over a compact set $K \subset \R^d$ is a linear
  map $\omega \colon \bN_m(K) \to \R$ that satisfies one of the
  following equivalent continuity properties:
  \begin{enumerate}
  \item[(A)] $\omega(T_n) \to 0$ for any bounded sequence $(T_n)$ in
    $\bN_m(K)$ that converges in flat norm to $0$;
  \item[(B)] the restriction of $\omega$ to the unit ball of
    $\bN_m(K)$ is $\bF$-continuous;
  \item[(C)] for all $\varepsilon > 0$, there is some $\theta \geq 0$
    such that
    \[
    |\omega(T)| \leq \varepsilon \bN(T) + \theta \bF(T)
    \]
    holds for any normal current $T \in \bN_m(K)$.
  \end{enumerate}
  One clearly has (A) $\iff$ (B) and (C) $\implies$ (A). The only non
  trivial implication (A) $\implies$ (C) can be derived as a short
  consequence of the compactness theorem. Indeed, suppose by
  contradiction that (A) holds and (C) is false. In this case, there
  is $\varepsilon > 0$ and a sequence $(T_n)$ of normal currents
  supported in $K$, with normal masses $\bN(T_n) = 1$ such that
  \begin{equation}
    \label{eq:1}
    |\omega(T_n)| > n \bF(T_n) + \varepsilon
  \end{equation}
  for all $n$. Some subsequence $(T_{n_k})$ converges to a normal
  current $T \in \bN_m(K)$ in flat norm. Property (A) then implies
  that $\omega(T_{n_k}) \to \omega(T)$. Consequently, $\bF(T_{n_k})
  \leq n_k^{-1} |\omega(T_{n_k})|$ tends to $0$ as $k \to \infty$,
  which implies that $T = 0$ and $\omega(T_{n_k}) \to 0$. This is in
  contradiction with~\eqref{eq:1}.
\end{Empty}

\begin{Empty}
  The space of $m$-charges over $K$ is denoted $\bCH^m(K)$. As $\bF
  \leq \bN$, the continuity property (C) above implies that charges
  are $\bN$-continuous, \textit{i.e} $\bCH^m(K)$ is a subspace of the
  dual $\bN_m(K)^*$. We set
  \[
  \| \omega \|_{\bCH^m(K)} \defeq \sup \left\{ \omega(T) : T \in
  \bN_m(K) \text{ and } \bN(T) \leq 1 \right\}.
  \]
  In fact, $\bCH^m(K)$ is a closed subspace of $\bN_m(K)^*$, for if
  $(\omega_n)$ is a sequence in $\bCH^m(K)$ converging towards $\omega
  \in \bN_m(K)^*$, then $\omega_n \to \omega$ uniformly on the unit
  ball of $\bN_m(K)$. We conclude by (B) that $\omega$ is a charge.

  In addition, there is also a notion of \emph{weak convergence of
    charges}: we say that $\omega_n \to \omega$ weakly whenever
  $\omega_n(T) \to \omega(T)$ for all $T \in \bN_m(K)$.
\end{Empty}

\begin{Empty}[Exterior derivative]
  Operations on normal currents, such as pushforwards by Lipschitz
  maps, taking the boundary, have a counterpart in term of charges,
  defined by duality. Here we focus only on defining the
  \emph{exterior derivative} $\diff \omega \in \bCH^{m+1}(K)$, by
  setting
  \[
  \diff \omega(T) \defeq \omega(\partial T)
  \]
  for all $T \in \bN_m(K)$. That $\diff \omega$ is continuous, in the
  sense of charges, is a consequence of the identities
  \begin{equation}
    \label{eq:FNboundary}
  \bN(\partial T) \leq \bN(T), \qquad \bF(\partial T) \leq \bF(T)
  \end{equation}
  that furthermore implies that $\mathrm{d} \colon \bCH^{m}(K) \to
  \bCH^{m+1}(K)$ is bounded. As $\partial \circ \partial = 0$, we have
  $\diff \circ \diff = 0$. In other words, $(\bCH^{\bullet}(K),
  \diff)$ is a cochain complex. In fact, the De Pauw-Moonens-Pfeffer
  representation theorem expresses that this is the smallest cochain
  complex spanned by continuous differential forms.

  Before we state this result properly, we will first identify
  $0$-charges with continuous functions. To each $0$-charge $\omega$,
  we associate the function $\Gamma(\omega) \in C(K)$ defined by
  $\Gamma(\omega)(x) = \omega(\llbracket x \rrbracket)$. The
  continuity of $\Gamma(\omega)$ is a consequence of that of $\omega$.
\end{Empty}

\begin{Theorem}
  \label{thm:CH0C}
  $\Gamma \colon \bCH^0(K) \to C(K)$ is a Banach space isomorphism.
\end{Theorem}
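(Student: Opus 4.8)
The plan is to show that $\Gamma$ is a bounded linear bijection whose inverse is also bounded; an application of the open mapping theorem (or a direct norm estimate in both directions) then finishes the argument. Linearity of $\Gamma$ is immediate from the definition $\Gamma(\omega)(x) = \omega(\llbracket x \rrbracket)$. Boundedness is easy too: since $\bN(\llbracket x \rrbracket) = 1$ for every $x \in K$, we have $|\Gamma(\omega)(x)| = |\omega(\llbracket x \rrbracket)| \leq \|\omega\|_{\bCH^0(K)}$, so $\|\Gamma(\omega)\|_\infty \leq \|\omega\|_{\bCH^0(K)}$. The content of the theorem lies in \emph{surjectivity together with a reverse estimate}: given $f \in C(K)$, one must produce a $0$-charge $\omega$ with $\Gamma(\omega) = f$ and $\|\omega\|_{\bCH^0(K)} \leq C\|f\|_\infty$ (in fact one expects $C = 1$), and one must check injectivity.

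For injectivity: if $\Gamma(\omega) = 0$, then $\omega$ vanishes on all $0$-currents of the form $\llbracket x \rrbracket$, hence on their finite linear combinations, the polyhedral $0$-chains. These are $\bF$-dense (indeed $\bM$-dense) among normal $0$-currents supported in $K$ — every $T \in \bN_0(K)$ is represented by a finite signed measure, which can be approximated in flat norm by atomic measures — and since the approximating sequence can be taken with bounded normal mass, continuity property (A) of charges forces $\omega(T) = 0$. So $\omega = 0$.

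For surjectivity, the natural candidate is $\omega(T) \defeq \int_K f \, \diff\mu_T$ where, for $T \in \bN_0(K)$, the current $T$ acting on $0$-forms (i.e.\ functions) is itself a signed Radon measure $\mu_T$ with total variation $\|\mu_T\| = \bM(T) \le \bN(T)$; equivalently $T = \mu_T$ as a distribution of order $0$. One then has $|\omega(T)| \le \|f\|_\infty \|\mu_T\| \le \|f\|_{\infty}\bN(T)$, which gives both that $\omega \in \bN_0(K)^*$ with the desired norm bound and, via continuity property (C) with $\varepsilon$ arbitrary and $\theta = \|f\|_\infty$ (using $\bM(T) \le \bF(T)$ for $0$-currents — note $\partial T$ is a $(-1)$-current, i.e.\ the mass of the boundary is absent, so actually $\bN(T) = \bM(T) = \|\mu_T\|$ and more care is needed: one uses that $\mu_T$ restricted to small sets has small variation is not automatic, so instead simply observe $|\omega(T)|\le \|f\|_\infty \bN(T)$ and note that a functional dominated by a multiple of $\bN$ alone need not be a charge — hence one must genuinely check (A)), that $\omega$ is a charge. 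The cleanest route to the charge property is continuity (A): if $T_n \to 0$ in $\bF$ with $\bN(T_n)$ bounded, approximate $f$ uniformly on $K$ by a Lipschitz function $g$; then $|\omega_g(T_n)| = |T_n(g)| \le \bF_K(T_n)\max\{\|g\|_\infty,\rmLip(g)\} \to 0$ since the flat norm controls evaluation against Lipschitz functions, while $|\omega(T_n) - \omega_g(T_n)| \le \|f - g\|_\infty \bN(T_n)$ is uniformly small; letting $g \to f$ gives $\omega(T_n) \to 0$. Finally $\Gamma(\omega)(x) = \int_K f\,\diff\delta_x = f(x)$, so $\Gamma(\omega) = f$.

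Combining: $\Gamma$ is a linear bijection with $\|\Gamma(\omega)\|_\infty \le \|\omega\|_{\bCH^0(K)}$ and, for its inverse, $\|\Gamma^{-1}(f)\|_{\bCH^0(K)} \le \|f\|_\infty$; hence $\Gamma$ is an isometric isomorphism of Banach spaces, which is more than claimed. The step I expect to be the genuine obstacle is verifying the charge continuity property for the constructed $\omega$: a functional bounded merely by $\bN(T)$ is \emph{a priori} only in $\bN_0(K)^*$, not in $\bCH^0(K)$, so one really needs the Lipschitz-approximation argument above (exploiting that $\bF$, unlike $\bN$, sees only the Lipschitz regularity of the test function) together with the uniform bound on $\bN(T_n)$ to pass to the limit. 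Everything else — linearity, the norm bound for $\Gamma$, density of polyhedral $0$-chains for injectivity — is routine.
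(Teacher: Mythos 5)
Your proof is correct and follows essentially the same strategy as the paper: injectivity via $\bF$-approximation of normal $0$-currents by polyhedral $0$-chains with bounded mass, and surjectivity via uniform approximation of $f$ by a Lipschitz function to obtain the $\varepsilon \bN + \theta \bF$ continuity estimate; your only cosmetic deviation is defining the candidate charge directly as $T \mapsto \int_K f \,\diff \mu_T$ via the measure representation of normal $0$-currents, rather than on polyhedral chains first and extending by density, which also gives you the isometry without invoking the open mapping theorem. One parenthetical claim is false, though harmless since you never use it: polyhedral $0$-chains are \emph{not} $\bM$-dense in $\bN_0(K)$, because a non-atomic measure is mutually singular with every atomic one, so their mass distance equals the sum of their total variations; only the $\bF$-density with bounded normal masses holds, and that is what your argument actually uses.
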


\begin{proof}
  First we check that $\Gamma$ is a continuous. For all $x \in K$, we
  have
  \[
  |\Gamma(\omega)(x)| \leq \|\omega\|_{\bCH^0(K)} \bM(\llbracket x
  \rrbracket) = \|\omega\|_{\bCH^0(K)}.
  \]
  Thus $\|\Gamma(\omega)\|_\infty \leq \|\omega\|_{\bCH^0(K)}$.

  Next we claim that $\Gamma$ is injective. Let us call $\bP_0(K)$ the
  space of polyhedral $0$-currents supported in $K$, \textit{i.e} the
  linear subspace of $\calD_0(\R^d)$ spanned by the $\llbracket x
  \rrbracket$, $x \in K$. By an easy corollary of the deformation
  theorem~\cite[4.2.9]{Fede}, every $T \in \bN_0(K)$ is the
  $\bF$-limit of a sequence $(T_n)$ in $\bP_0(K)$ such that $\bM(T_n)
  \uparrow \bM(T)$.

  Let $\omega \in \ker \Gamma$, so that $\omega$ vanishes on
  $\bP_0(K)$. By the preceding result and the continuity property of
  charges, $\omega = 0$. This proves that $\Gamma$ is injective.

  Next we prove the surjectivity of $\Gamma$. Let $g \in C(K)$. We
  define the function $\omega$, on polyhedral $0$-chains, by
  \[
  \omega \left( \sum_{k=1}^n a_k \llbracket x_k \rrbracket
  \right) = \sum_{k=1}^n a_k g(x_k).
  \]
  Let $\varepsilon > 0$. There is a Lipschitz function $f \in
  \rmLip(K)$ such that $\|f - g\|_\infty \leq \varepsilon$. Setting
  $\theta = \max\{\|f\|_\infty, \rmLip f\}$, we have
  \begin{align}
    |\omega(T)| & \leq \left| \sum_{k=1}^n a_k f(x_k) \right|+ \left|
    \sum_{k=1}^n a_k (f - g)(x_k) \right| \notag \\ & \leq \theta
    \bF(T) + \varepsilon \bM(T) \label{eq:2}
  \end{align}
  for every $0$-polyhedral chain $T$. We extend $\omega$ to $\bM_0(K)$
  with
  \[
  \omega(T) = \lim_{n \to \infty} \omega(T_n)
  \]
  where $T \in \bM_0(K)$ and $(T_n)$ is any sequence of polyhedral
  $0$-chains that $\bF$-converges to $T$, with $\bM(T_n) \uparrow
  \bM(T)$. By~\eqref{eq:2}, the quantity $\omega(T)$, thus defined,
  does not depend on the approximating sequence. It is also
  straightforward that $\omega$ is linear and~\eqref{eq:2} holds now
  for any $T \in \bM_0(K)$. Hence $\omega \in \bCH^0(K)$ and
  $\Gamma(\omega) = g$. This proves that $\Gamma$ is onto.

  Finally, $\Gamma^{-1}$ is continuous by the open mapping theorem.
\end{proof}

\begin{Empty}[Continuous differential forms]
  A continuous $m$-form $\omega \in C(K, \bigwedge^m \R^d)$ act on
  a normal current $T \in \bN_m(K)$ by means of
  \[
  \omega(T) \defeq \int_{\rmspt T} \langle \omega(x), \vec{T}(x)
  \rangle \diff \|T\|(x)
  \]
  We argue that this formula makes $\omega$ into an $m$-charge (we are
  committing a slight abuse of notation by using the same symbol
  $\omega$ for both the continuous form and the corresponding charge,
  even though the obvious mapping $C(K, \bigwedge^m \R^d) \to
  \bCH^m(K)$ may be not injective). Linearity is clear. As for
  continuity, let us fix $\varepsilon > 0$ and choose a compactly
  supported smooth form $\phi \colon \R^d \to \bigwedge^m \R^d$ such
  that $|\omega(x) - \phi(x)| \leq \varepsilon$ for all $x \in
  K$. Then, for all $T \in \bN_m(K)$,
  \begin{align*}
    |\omega(T)| & \leq \left| \int_{\rmspt T} \langle \omega(x) -
    \phi(x), \vec{T}(x) \rangle \diff \|T\|(x)\right| + |T(\phi)| \\ &
    \leq \varepsilon \bM(T) + \max \{ \|\phi\|_\infty, \| \diff \phi
    \|_\infty \} \bF(T) \\
    & \leq \varepsilon \bN(T) + \theta \bF(T)
  \end{align*}
  for $\theta = \max \{ \|\phi\|_\infty, \| \diff \phi\|_\infty\}$.
\end{Empty}

We now state a criterion for relative compactness in $\bCH^m(K)$. It
will prove useful in the next section for establishing the basic
properties of the space of fractional charges.

\begin{Theorem}
  \label{thm:compact}
  Let $\Omega \subset \bCH^m(K)$. The following are equivalent:
  \begin{itemize}
  \item[(A)] $\Omega$ is relatively compact;
  \item[(B)] the continuity inequality
    \[
    |\omega(T)| \leq \varepsilon \bN(T) + \theta \bF(T)
    \]
    holds for all $\omega \in \Omega$ and $T \in \bN_m(K)$, with a
    $\theta = \theta(\varepsilon) \geq 0$ that can chosen
    independently of $\omega$.
  \end{itemize}
\end{Theorem}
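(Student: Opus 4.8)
The plan is to recognise Theorem~\ref{thm:compact} as a disguised Arzelà–Ascoli statement. The key observation is that, by the Compactness Theorem, the flat-norm ball $B_c \defeq \{T \in \bN_m(K) : \bN(T) \leq c\}$ is a compact metric space for $\bF$, and that, by continuity property~(B) of~\ref{e:charges}, each charge restricts to a genuinely $\bF$-continuous function on $B_c$. Moreover $\|\omega\|_{\bCH^m(K)} = \sup_{T \in B_1} |\omega(T)|$, so — up to the homogeneity of $\bN$ and $\bF$ — relative compactness of $\Omega$ in $\bCH^m(K)$ is exactly relative compactness of $\{\omega|_{B_1} : \omega \in \Omega\}$ in $C(B_1)$ with the uniform norm. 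Both implications then follow by feeding the hypotheses into this dictionary.

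For (B)~$\implies$~(A), I would first apply the uniform inequality with $\varepsilon = 1$ to get $|\omega(T)| \leq (1 + \theta(1))\bN(T)$ for all $\omega \in \Omega$, so that $\{\omega|_{B_1}\}$ is uniformly bounded. Equicontinuity on $B_1$, uniform in $\omega \in \Omega$, comes from writing $\omega(T) - \omega(T') = \omega(T - T')$ for $T, T' \in B_1$, noting $\bN(T - T') \leq 2$, and invoking the hypothesis: $|\omega(T) - \omega(T')| \leq 2\varepsilon + \theta(\varepsilon)\bF(T - T')$. Arzelà–Ascoli then extracts, from any sequence $(\omega_n)$ in $\Omega$, a subsequence converging uniformly on $B_1$; by homogeneity $\omega_{n_k}(T)$ converges for every $T \in \bN_m(K)$, defining a linear $\omega$ with $\omega_{n_k} \to \omega$ uniformly on $B_1$, i.e. in $\bCH$-norm. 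Passing to the limit in $|\omega_{n_k}(T)| \leq \varepsilon \bN(T) + \theta(\varepsilon)\bF(T)$ shows $\omega$ satisfies continuity property~(C), hence $\omega \in \bCH^m(K)$; so $\Omega$ is relatively compact.

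For (A)~$\implies$~(B), I would argue by contradiction: if (B) fails there is $\varepsilon > 0$ so that for each $n$ there are $\omega_n \in \Omega$ and (necessarily nonzero) $T_n \in \bN_m(K)$, normalised to $\bN(T_n) = 1$, with $|\omega_n(T_n)| > \varepsilon + n\bF(T_n)$. Relative compactness gives a subsequence $\omega_{n_k} \to \omega$ in $\bCH^m(K)$, with $\omega$ a charge; since $|\omega_{n_k}(T_{n_k})| \leq \|\omega_{n_k}\|_{\bCH^m(K)}$ is bounded, $n_k \bF(T_{n_k})$ is bounded, so $\bF(T_{n_k}) \to 0$, and — the $T_{n_k}$ lying in the $\bF$-compact ball $B_1$ — a further subsequence flat-converges to $0$. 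Then continuity property~(A) of charges yields $\omega(T_{n_k}) \to 0$, while $|\omega_{n_k}(T_{n_k}) - \omega(T_{n_k})| \leq \|\omega_{n_k} - \omega\|_{\bCH^m(K)} \to 0$; hence $\omega_{n_k}(T_{n_k}) \to 0$, contradicting $|\omega_{n_k}(T_{n_k})| > \varepsilon$.

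The only non-formal ingredient is the Compactness Theorem, and this is where the real content sits: without the $\bF$-compactness of $B_1$ neither Arzelà–Ascoli nor the extraction in the contradiction argument is available, and the equicontinuity estimate is inert. The remaining care points are minor: checking that the limit functional in (B)~$\implies$~(A) is again a charge (best done via property~(C), which passes to pointwise limits, rather than (A) or (B)), and checking that uniform convergence on $B_1$ genuinely coincides with $\bCH$-norm convergence, which is immediate from $\|\cdot\|_{\bCH^m(K)} = \sup_{B_1}|\cdot|$.
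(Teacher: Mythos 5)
Your proposal is correct and follows essentially the same route as the paper: the contradiction argument via the Compactness Theorem for (A)\,$\implies$\,(B), and Arzel\`a--Ascoli on the flat-compact unit ball of $\bN_m(K)$ for (B)\,$\implies$\,(A), with uniform boundedness from the $\varepsilon=1$ case and equicontinuity from the uniform inequality. The only cosmetic difference is at the end of (B)\,$\implies$\,(A): the paper concludes by noting that the isometric image of $\bCH^m(K)$ in $C(B_{\bN_m(K)})$ is closed, whereas you verify directly that the limit functional satisfies continuity property (C); both are fine.
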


\begin{proof}
  (A) $\implies$ (B). We prove this implication by
  contradiction. Suppose there are $\varepsilon > 0$ and two sequences
  $(\omega_n)$ in $\Omega$ and $(T_n)$ in $\bN_m(K)$ such that
  \[
  |\omega_n(T_n)| > \varepsilon \bN(T_n) + n \bF(T_n) 
  \]
  for all integers $n$. We can also suppose $\bN(T_n) = 1$ for all
  $n$. As $\Omega$ is relatively compact, it is bounded, consequently
  \[
  n \bF(T_n) < \sup_{\omega \in \Omega} \|\omega\|_{\bCH^m(K)} <
  \infty
  \]
  which implies that $(T_n)$ converges to $0$ in flat norm. On the
  other side, there is a subsequence $(\omega_{n_k})$ that converges
  to $\omega \in \bCH^m(K)$. Hence
  \[
  |\omega_{n_k}(T_{n_k})| \leq |\omega(T_{n_k})| + \|\omega -
  \omega_{n_k}\|_{\bCH^m(K)} \to 0
  \]
  which contradicts that $|\omega_{n_k}(T_{n_k})| > \varepsilon$.

  (B) $\implies$ (A). Denote by $B_{\bN_m(K)}$ the unit ball of
  $\bN_m(K)$, metrized by $\bF$, and let $\iota \colon \bCH^m(K) \to
  C(B_{\bN_m(K)})$ be the linear map that sends a charge to its
  restriction to $B_{\bN_m(K)}$. Here, $C(B_{\bN_m(K)})$ is given the
  supremum norm, so that $\iota$ is an isometric embedding.

  Since $\bCH^m(K)$ is a Banach space, $\iota(\bCH^m(K))$ is
  closed. We only need to show that $\iota(\Omega)$ is relatively
  compact in $C(B_{\bN_m(K)})$.

  First, the inequality in (B) (for $\varepsilon = 1$) entails that
  $\iota(\Omega)$ is pointwise bounded. Now, for an arbitrary
  $\varepsilon > 0$ there is $\theta \geq 0$ as in (B). If $T, S \in
  B_{\bN_m(K)}$ satisfy $\bF(T - S) \leq \varepsilon / \theta$, then
  for any $\omega \in \Omega$, one has
  \[
  |\iota(\omega)(T) - \iota(\omega)(S)| \leq \theta \bF(T - S) +
  \varepsilon \bN(T - S) \leq 3 \varepsilon.
  \]
  This proves that $\iota(\Omega)$ is equicontinuous, thus relatively
  compact by the Arzelà-Ascoli theorem. The proof is then finished.
\end{proof}

\begin{Empty}[Charges over $\R^d$]
  \label{e:chargesoverRd}
  It is possible, as was done in~\cite{DePaMoonPfef}, to extend the
  notion of $m$-charge over arbitrary subsets of $\R^d$. In this
  article, we will not attempt to be as general as possible, but
  rather concentrate on charges defined over $\R^d$, a domain
  particularly suited for performing convolutions or introducing the
  so-called Littlewood-Paley decomposition of Section~\ref{sec:LPdec}.

  We call $m$-charge over $\R^d$ a linear functional $\omega \colon
  \bN_m(\R^d) \to \R$ whose restriction to $\bN_m(K)$ is an element of
  $\bCH^m(K)$, for all compact subsets $K \subset \R^d$. The space of
  $m$-charges is denoted $\bCH^m(\R^d)$, and we equip it with the
  Fréchet topology induced by the family of seminorms
  \[
  \| \omega \|_{\bCH^m(K)} = \sup \left\{ \omega(T) : T \in \bN_m(K)
  \text{ and } \bN(T) \leq 1 \right\}
  \]
  where $K$ ranges over all compact subsets of $\R^d$.

  The results proven in this section carry over to
  $\R^d$. Specifically,
  \begin{itemize}
  \item[(A)] the definition of weak convergence, that of the exterior
    derivative in $\bCH^m(\R^d)$ is unchanged.
  \item[(B)] The map $\Gamma$ that sends $\omega \in \bCH^m(\R^d)$ to
    the continuous function $x \mapsto \omega(\llbracket x
    \rrbracket)$ is a Fréchet space isomorphism. Here, the topology
    $C(\R^d)$ is induced by the seminorms $\| \cdot \|_{\infty, K}$,
    where $K \subset \R^d$ is compact.
  \item[(C)] Any continuous form $\omega \in C(\R^d; \bigwedge^m
    \R^d)$ can be regarded as a charge.
  \item[(D)] A subset $\Omega \subset \bCH^m(\R^d)$ is relatively
    compact if and only if, for every compact set $K \subset \R^d$ and
    every $\varepsilon > 0$, there is $\theta = \theta(K,\varepsilon)$
    such that for all $T \in \bN_m(K)$, we have $|\omega(T)| \leq
    \varepsilon \bN(T) + \theta \bF(T)$.
  \end{itemize}
\end{Empty}

\begin{Empty}[Regularization by convolution]
  Let $\phi \in C^\infty_c(\R^d)$. We define the \emph{convolution} of
  a charge $\omega \in \bCH^m(\R^d)$ with $\phi$ by
  \[
  (\omega * \phi)(T) = \omega(T * \check{\phi}) \qquad \text{for all }
  T \in \bN_m(\R^d).
  \]
  The next proposition shows that this construction yields a smooth
  form.
\end{Empty}

\begin{Proposition}
  \label{prop:smooth}
  Let $\omega \in \bCH^m(\R^d)$ and $\phi \in C^\infty_c(\R^d)$. Then
  $\omega * \phi \in C^\infty(\R^d; \bigwedge^m \R^d)$. Explicitly,
  \[
  (\omega * \phi)(z) = \sum_{I \in \Lambda(d, m)} \omega \left(
  \calL^d \wedge \phi( z - \cdot ) \boldsymbol{e}_I \right) \diff x_I
  \text{ for all } z \in \R^d.
  \]
\end{Proposition}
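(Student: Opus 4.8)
The plan is to exhibit the right-hand side as an independent object $\psi$, to show $\psi\in C^\infty$ by differentiating under $\omega$, and then to check that $\psi$ and $\omega*\phi$ agree once both are viewed as $m$-charges. Throughout, for $I\in\Lambda(d,m)$ and $z\in\R^d$ set $v_I(z)\defeq\calL^d\wedge\phi(z-\cdot)\boldsymbol{e}_I$. Since $x\mapsto\phi(z-x)\boldsymbol{e}_I$ is a smooth vectorfield with support in $(\rmspt\phi)+z$, we have $v_I(z)\in\bN_m(\R^d)$, with $\bM(v_I(z))=\|\phi\|_{L^1}$ and $\bM(\partial v_I(z))\leq C\|\nabla\phi\|_{L^1}$ (integrate by parts). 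Put $g_I\defeq\omega\circ v_I\colon\R^d\to\R$ and $\psi\defeq\sum_{I\in\Lambda(d,m)}g_I\,\diff x_I$. I will show that $\psi\in C^\infty(\R^d;\bigwedge^m\R^d)$ and that, as an $m$-charge, $\psi$ equals $\omega*\phi$; the displayed formula is then just the definition of $\psi$.

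\emph{Smoothness.} For a fixed coordinate direction $\boldsymbol{e}_j$, the difference quotients $t^{-1}\bigl(\phi(z+t\boldsymbol{e}_j-\cdot)-\phi(z-\cdot)\bigr)$ converge, as $t\to0$, to $(\partial_j\phi)(z-\cdot)$ uniformly together with all their partial derivatives, and with supports contained in a fixed compact set for $|t|\leq1$; hence the masses of the corresponding currents and of their boundaries both tend to $0$, so that $\bN\bigl(t^{-1}(v_I(z+t\boldsymbol{e}_j)-v_I(z))-\calL^d\wedge(\partial_j\phi)(z-\cdot)\boldsymbol{e}_I\bigr)\to0$. Since $\omega$ is $\bN$-continuous on $\bN_m(K)$ for every compact $K$ (continuity property (C) of \ref{e:charges}), $g_I$ is partially differentiable with $\partial_{z_j}g_I(z)=\omega(\calL^d\wedge(\partial_j\phi)(z-\cdot)\boldsymbol{e}_I)$. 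Iterating, $\partial^\alpha g_I(z)=\omega(\calL^d\wedge(\partial^\alpha\phi)(z-\cdot)\boldsymbol{e}_I)$ for every multi-index $\alpha$, and each such function is continuous, being the composition of the $\bN$-continuous map $z\mapsto\calL^d\wedge(\partial^\alpha\phi)(z-\cdot)\boldsymbol{e}_I$ with $\omega$. Thus $\psi\in C^\infty(\R^d;\bigwedge^m\R^d)$.

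\emph{Two applications of Fubini.} Fix $T\in\bN_m(\R^d)$ with $\rmspt T\subset K$ compact, and fix $r>0$ with $\rmspt\phi\subset\{|x|\leq r\}$. Unravelling the definitions, $(T*\check\phi)(\eta)=T(\phi*\eta)$ for $\eta\in\calD^m(\R^d)$; expanding $\phi*\eta$ componentwise and interchanging the two integrations (legitimate since the integrand is dominated by $|\phi(x-w)|\,|\eta(w)|$) gives $T*\check\phi=\calL^d\wedge\sum_I h_I\boldsymbol{e}_I$, where $h_I(w)=T(\phi(\cdot-w)\diff x_I)$. In particular $T*\check\phi\in\bN_m(K_r)$, so $\omega(T*\check\phi)$ is defined. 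A second interchange of the same kind yields the identity of currents
\[
\calL^d\wedge h_I\boldsymbol{e}_I=\int_{\R^d}\langle\diff x_I,\vec{T}(x)\rangle\,v_I(x)\,\diff\|T\|(x),
\]
the right-hand side being understood, after testing against an arbitrary $\eta\in\calD^m(\R^d)$, as the integral against the finite measure $\|T\|$ of the $\bN_m(K_r)$-valued map $x\mapsto\langle\diff x_I,\vec{T}(x)\rangle v_I(x)$.

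\emph{Interchanging $\omega$ with the integral, and conclusion.} It remains to establish $\omega(\calL^d\wedge h_I\boldsymbol{e}_I)=\int\langle\diff x_I,\vec{T}(x)\rangle g_I(x)\diff\|T\|(x)$, and this is the only genuinely delicate point, since $\|T\|$ is merely a finite Radon measure while $a_I(x)\defeq\langle\diff x_I,\vec{T}(x)\rangle$ is merely bounded and $\|T\|$-measurable. By Lusin's theorem, pick continuous functions $b_n$ with $\|b_n\|_\infty\leq\|a_I\|_\infty$ and $\int_K|a_I-b_n|\diff\|T\|\to0$, and set $W_n\defeq\calL^d\wedge\bigl(\int b_n(x)\phi(x-\cdot)\diff\|T\|(x)\bigr)\boldsymbol{e}_I$. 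Because $x\mapsto v_I(x)$ is $\bN$-continuous and $\bN$-bounded on the compact $K$, one checks that $W_n$ is the $\bN$-limit of its Riemann sums $\sum_j b_n(x_j)\|T\|(A_j)v_I(x_j)$; applying the linear $\bN$-continuous functional $\omega$ and letting the mesh go to $0$ gives $\omega(W_n)=\int_K b_n(x)g_I(x)\diff\|T\|(x)$. On the other hand $\bN(W_n-\calL^d\wedge h_I\boldsymbol{e}_I)\to0$, since the difference of the underlying vectorfields, and likewise of their gradients, is bounded uniformly by a constant times $\int_K|a_I-b_n|\diff\|T\|$; hence $\omega(W_n)\to\omega(\calL^d\wedge h_I\boldsymbol{e}_I)$, while $\int b_n g_I\diff\|T\|\to\int a_I g_I\diff\|T\|$. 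Summing over $I\in\Lambda(d,m)$,
\[
\omega(T*\check\phi)=\sum_I\int_{\R^d}\langle\diff x_I,\vec{T}(x)\rangle g_I(x)\,\diff\|T\|(x)=\int_{\R^d}\langle\psi(x),\vec{T}(x)\rangle\,\diff\|T\|(x)=\psi(T),
\]
which is the assertion. (Alternatively, the interchange in the last step can be phrased through Bochner integration in the Banach space $(\bN_m(K_r),\bN)$, continuous linear functionals commuting with Bochner integrals.) The main obstacle is this interchange; the smoothness part and the two Fubini identities are routine.
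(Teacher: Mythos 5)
Your proof is correct, and it differs from the paper's in the key identification step. The smoothness part is essentially the same argument (difference quotients of $z \mapsto \calL^d \wedge \phi(z-\cdot)\boldsymbol{e}_I$ converging in normal mass, combined with the $\bN$-boundedness of $\omega$ on each $\bN_m(K)$). For the identity $\omega * \phi = \psi$, however, the paper first reduces to currents of the special form $T = \calL^d \wedge \xi$ with $\xi$ a compactly supported \emph{smooth} $m$-vectorfield: it observes that $(\omega*\phi - \tilde\omega)(T) = \lim_{\varepsilon\to 0}(\omega*\phi - \tilde\omega)(T * \Phi_\varepsilon)$ by Proposition~\ref{prop:convT}(C) and (E) (both sides being charges), and that $T * \Phi_\varepsilon$ has this form by \cite[4.1.2]{Fede}; the interchange of $\omega$ with the integral is then a plain Riemann-sum argument with a continuous density $\xi_I$. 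You instead keep a general normal current $T$, represent $T * \check\phi$ as a $\|T\|$-integral of the currents $v_I(x)$ weighted by the merely bounded measurable density $a_I = \langle \diff x_I, \vec T\rangle$, and pay for this with the Lusin/Bochner step. Both routes are sound: the paper's mollification trick sidesteps the measurability issue entirely (at the cost of invoking the approximation $T * \Phi_\varepsilon \to T$ and the fact that $\tilde\omega$, being a continuous form, is itself a charge), while your argument is more self-contained and makes explicit where the genuine analytic content lies — namely that a $\bN$-bounded functional commutes with a Bochner integral in $(\bN_m(K_r), \bN)$. One small point worth making explicit in your write-up: you use that $T * \check\phi$ is itself a normal current (so that $\omega$ may be evaluated on it); this follows from the analogue of Proposition~\ref{prop:convT}(B)--(C) for a general kernel $\check\phi$, giving $\bN(T * \check\phi) \leq \|\phi\|_{L^1}\bN(T)$, but it is not literally contained in the statement as given for $\Phi_\varepsilon$.
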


\begin{proof}
  Call $\tilde{\omega}(z)$ the right-hand side. First we check that
  $\tilde{\omega}$ is a smooth $m$-form. This is done by ensuring
  that, for all $1 \leq i \leq d$ and for any sequence $(h_n)$ of
  nonzero real numbers tending to 0,
  \[
  \frac{ \calL^d \wedge \phi( z + h_n \boldsymbol{e}_i - \cdot )
    \boldsymbol{e}_I - \calL^d \wedge \phi( z - \cdot )
    \boldsymbol{e}_I }{h_n} \to \calL^d \wedge \frac{\partial
    \phi}{\partial x_i}(z - \cdot) \boldsymbol{e}_I
  \]
  in flat norm with uniformly bounded normal masses.

  Next, in order to prove that the charges $\omega * \phi$ and
  $\tilde{\omega}$ coincide, we need only do so on currents of the
  form $\calL^d \wedge \xi$, where $\xi = \sum_{I \in \Lambda(d, m)}
  \xi_I \boldsymbol{e}_I$ is a compactly supported smooth
  $m$-vectorfield. This is because, for all $T \in \bN_m(\R^d)$,
  \[
  (\omega * \phi - \tilde{\omega})(T) = \lim_{\varepsilon \to 0}
  (\omega * \phi - \tilde{\omega})(T * \Phi_\varepsilon)
  \]
  by Proposition~\ref{prop:convT}(C) and (E), and $T *
  \Phi_\varepsilon$ has the form $\calL^d \wedge \xi$ by
  \cite[4.1.2]{Fede}.

  We begin by evaluating
  \[
  (\calL^d \wedge \xi)(\tilde{\omega}) = \sum_{I \in \Lambda(d,m)}
  \int_{\R^d} \omega(\calL^d \wedge \phi( z - \cdot )
  \boldsymbol{e}_I) \xi_I(z) \diff z.
  \]
  On the other hand, one has
  \[
  \omega(T * \check{\phi}) = \sum_{I \in \Lambda(d,m)} \omega \left(
  \calL^d \wedge \xi_I * \check{\phi} \boldsymbol{e}_I \right).
  \]
  Following~\cite[4.1.2]{Fede}, we introduce, for every $n \geq 1$, a
  partition $A_{n,1}, \dots, A_{n,p_n}$ of $\rmspt \xi$ into Borel
  sets of diameter less than $n^{-1}$ and choose points $z_{n, k} \in
  A_{n, k}$ for $1 \leq k \leq p_n$. Then
  \[
  \sum_{k=1}^{p_n} \xi_I(z_{n,k}) \left(\calL^d \wedge \phi(z_{n,k} - \cdot)
  \boldsymbol{e}_I\right) \calL^d(A_{n,k}) \to \calL^d \wedge \xi_I *
  \check{\phi} \boldsymbol{e}_I
  \]
  in flat norm with uniformly bounded normal masses. Thus,
  \begin{align*}
    \omega(T * \check{\phi}) & = \lim_{n \to \infty} \sum_{I \in
      \Lambda(d,m)} \sum_{k=1}^{p_n} \xi_I(z_{n,k}) \omega(\calL^d \wedge
    \phi(z_{n,k} - \cdot) \boldsymbol{e_I}) \calL^d(A_{n,k}) \\ & =
    \sum_{I \in \Lambda(d,m)} \int_{\R^d} \xi_I(z) \omega(\calL^d
    \wedge \phi(z - \cdot) \boldsymbol{e_I}) \diff z \\ & = (\calL^d
    \wedge \xi)(\tilde{\omega}). \qedhere
  \end{align*}
\end{proof}

\section{$\alpha$-Fractionality}

\begin{Empty}
  Let $\alpha \in \mathopen{(} 0, 1 \mathclose{]}$.  An
    \emph{$\alpha$-fractional $m$-charge} over a compact set $K
    \subset \R^d$ is a linear functional $\omega \colon \bN_m(K) \to
    \R$ for which there is a constant $C \geq 0$ such that
    \[
    |\omega(T)| \leq C \bN(T)^{1-\alpha} \bF(T)^\alpha \text{ for all } T \in \bN_m(K).
    \]
  It is clear that the above requirement is a stronger condition than
  the charge continuity property stated in Paragraph~\ref{e:charges}.

  We adopt the notation $\bCH^{m, \alpha}(K)$ to represent the space
  of $\alpha$-fractional $m$-charges, normed by
  \begin{equation}
    \label{eq:seminormCHalpha}
  \|\omega\|_{\bCH^{m, \alpha}(K)} = \inf \left\{ C \geq 0 :
  |\omega(T)| \leq C \bN(T)^{1-\alpha} \bF(T)^{\alpha} \text{ for all
  } T \in \bN_m(K) \right\}.
  \end{equation}
  We also define $\| \omega \|_{\bCH^{m, \alpha}(K)} = \infty$ if
  $\omega \in \bCH^m(K) \setminus \bCH^{m, \alpha}(K)$.
  
  The parameter $\alpha$ represents regularity. Equivalently, an
  $m$-charge $\omega$ is $\alpha$-fractional whenever its restriction
  to the unit ball of $\bN_m(K)$, endowed with the distance inherited
  from $\bF$, is $\alpha$-Hölder continuous. One clearly has
  inclusions
  \[
  \bCH^{m,\beta}(K) \subset \bCH^{m, \alpha}(K) \subset \bCH^m(K)
  \]
  (that are continuous) whenever $\beta \geq \alpha$. In addition, the
  reader may use the continuity of the second embedding and the lower
  semicontinuity of $\|\cdot\|_{\bCH^{m, \alpha}(K)}$ with respect to
  weak convergence to check that $\bCH^{m, \alpha}(K)$ is a Banach
  space.

  When $\alpha = 1$ and $K$ is a Lipschitz neighborhood retract, we
  encounter a well-known object. Indeed, in this case, a
  $1$-fractional charge $\omega$ is $\bF$-continuous and $\bN_m(K)$ is
  $\bF$-dense in $\bF_m(K)$. As such, $\omega$ can be uniquely
  extended so as to become an element of $\bF_m(K)^*$, the space of
  \emph{flat $m$-cochains over $K$}, introduced by H. Whitney.

  More generally, we can think of $\alpha$-fractionality as a
  regularity that is intermediate between that of mere charges and
  that of flat cochains. We observe that, as a consequence
  of~\eqref{eq:FNboundary}, the exterior derivative of an
  $\alpha$-fractional $m$-charge is again $\alpha$-fractional (and the
  map $\diff \colon \bCH^{m, \alpha}(K) \to \bCH^{m+1, \alpha}(K)$ is
  continuous).

  We can of course define $\alpha$-fractional $m$-charges over the
  whole space $\R^d$. They are by definition the charges $\omega \in
  \bCH^m(\R^d)$ whose restrictions to $\bN_m(K)$ belong to $\bCH^{m,
    \alpha}(K)$, for each compact subset $K$ of $\R^d$. The space they
  form is denoted $\bCH^{m, \alpha}(\R^d)$, and we give it the locally
  convex topology induced by the seminorms $\| \cdot \|_{\bCH^{m,
      \alpha}(K)}$ defined as in~\eqref{eq:seminormCHalpha}, where $K$
  ranges over all compact sets. The elements of $\bCH^{m,1}(\R^d)$
  correspond to the locally flat $m$-cochains over $\R^d$ described in
  \cite[Section~4]{FedeReal}.

  It may seem strange that the coefficient quantifying the regularity
  of a fractional charge is not diminished by $1$ when the exterior
  derivative is applied. This is why we introduced the term
  $\alpha$-fractionality. It is already the case that the exterior
  derivative of a flat cochain remains a flat cochain. The distinction
  between working with generalized differential forms $\omega$ and,
  say, Sobolev functions, whose high-order distributional derivatives
  are increasingly poorly controlled, is that $\diff^k \omega = 0$ for
  $k \geq 2$.
  
  In the next paragraphs we will exhibit two important examples of
  fractional charges, which served as motivations for the definition.
\end{Empty}

\begin{Empty}[Relationship with Hölder charges]
  Here we look at the zero-codimensional case $m = d$ and $K = [0,
    1]^d$. We say that a measurable set $E \subset [0, 1]^d$ has
  finite perimeter whenever $\llbracket E \rrbracket \in \bN_d(K)$ and
  we denote by $\niceBV(K)$ the algebra of such sets. To each
  $\alpha$-fractional $d$-charge $\omega$, we associate the map
  $\Upsilon(\omega) \colon \niceBV(K) \to \R$ that sends $E$ to
  $\omega(\llbracket E \rrbracket)$. We set
  \[
  \gamma \defeq \frac{d-1}{d} + \frac{\alpha}{d} \in \left(
  \frac{d-1}{d}, 1 \right].
  \]
  The map $\mu = \Upsilon(\omega)$ satisfies the following properties
  \begin{enumerate}
  \item[(A)] Finite additivity: for disjoint sets with finite
    perimeters $E, F \in \niceBV(K)$, we have $\mu(E \cup F) =
    \mu(E) + \mu(F)$;
  \item[(B)] Continuity: if $(E_n)$ is a sequence in $\niceBV(K)$ with
    uniformly bounded perimeters $\sup_n \bM(\partial\llbracket E_n
    \rrbracket) < \infty$ and such that $\calL^d(E_n) \to 0$, we have
    $\mu(E_n) \to 0$;
  \item[(C)] Hölder control over dyadic cubes: there is a constant $C
    \geq 0$ such that $|\mu(Q)| \leq C \calL^d(Q)^\gamma$ for all
    dyadic cubes $Q \subset K$.
  \end{enumerate}
  The last property comes from
  \begin{align*}
  |\omega(\llbracket Q \rrbracket)| & \leq \| \omega \|_{\bCH^{d,
      \alpha}(K)} \bN(\llbracket Q \rrbracket)^{1-\alpha}
  \bF(\llbracket Q \rrbracket)^{\alpha} \\ & \leq (1 + 2d)^{1-\alpha}
  \| \omega \|_{\bCH^{d, \alpha}(K)} \calL^d(Q)^{(1-\alpha)
    \frac{d-1}{d}} \calL^d(Q)^\alpha.
  \end{align*}
  The maps $\mu \colon \niceBV(K) \to \R$ that satisfy (A), (B) and
  (C) appeared in \cite{BouaDePa, Boua} under the name
  \emph{$\gamma$-Hölder charges}. The article \cite{BouaDePa} exhibits
  some examples of Hölder charges derived from stochastic processes,
  whereas \cite{Boua} used Hölder charges as integrators for
  Young-type multidimensional integrals. The space of $\gamma$-Hölder
  charges is designated $\rmsch^\gamma(K)$. We claim that
  \[
  \Upsilon \colon \bCH^{d, \alpha}(K) \to \rmsch^\gamma(K)
  \]
  is a one-to-one correspondence. Note that this paragraph violates
  our promise of a self-contained article, since it relies on material
  from \cite{Boua}. However, the result we demonstrate here, and even
  more so, the notion of (top-dimensional) Hölder charge, will not be
  utilized further in the article.

  First we prove that $\Upsilon$ is injective. We recall that each
  normal current in $\bN_d(K)$ has the form $\llbracket K \rrbracket
  \hel f$, where $f$ is a function with bounded variation supported in
  $K$. We can show the existence of a sequence of functions $(f_n)$
  supported in $K$, each $f_n$ being constant on dyadic cubes of
  side length $2^{-n}$, such that $\sup_n \bN(\llbracket K \rrbracket
  \hel f_n) < \infty$ and $\bF(\llbracket K \rrbracket \hel f -
  \llbracket K \rrbracket \hel f_n) \to 0$, see for example
  \cite[Lemma~4.8]{BouaDePa}. If $\omega$ is an $\alpha$-fractional
  $m$-charge in the kernel of $\Upsilon$, then $\omega(\llbracket K
  \rrbracket \hel f_n) = 0$ for each $n$. By letting $n \to \infty$,
  we obtain that $\omega$ vanishes on a general normal current
  $\llbracket K \rrbracket \hel f$.

  Conversely, for each $\mu \in \rmsch^\gamma(K)$ and each nonnegative
  function $f$ with bounded variation supported in $K$, we set
  \[
  \Upsilon^{-1}(\mu)(\llbracket K \rrbracket \hel f) = \int_0^\infty \mu(\{f > t\}) \diff t.
  \]
  For a general $f$, we define
  \[
  \Upsilon^{-1}(\mu)(\llbracket K \rrbracket \hel f) =
  \Upsilon^{-1}(\mu)(\llbracket K \rrbracket \hel f^+) -
  \Upsilon^{-1}(\mu)(\llbracket K \rrbracket \hel f^-)
  \]
  where $f^+$ and $f^-$ are the positive and negative parts of $f$. We
  claim that $\Upsilon^{-1}(\mu)$ is an $\alpha$-fractional
  $m$-charge. This results essentially comes from \cite[Theorem~3.10]{Boua}, where
  it was proven that
  \[
  \mu(E) \leq C \bM(\partial \llbracket E \rrbracket)^{1-\alpha}
  \calL^d(E)^\alpha
  \]
  for some constant $C = C(\mu)$ and for any $E \in
  \niceBV(K)$. Applying Young's inequality, we obtain
  \[
  \mu(E) \leq C \left( (1-\alpha) \lambda \bM(\partial \llbracket E
  \rrbracket) + \frac{\alpha}{\lambda^{\frac{1}{\alpha} - 1}}
  \calL^d(E) \right)
  \]
  for all $\lambda > 0$. Let $f$ be a nonnegative function with
  bounded variation supported in $K$. By the coarea formula,
  \begin{align*}
  |\Upsilon^{-1}(\mu)(\llbracket K \rrbracket \hel f)| & \leq
  C(1-\alpha) \lambda \int_0^\infty \bM(\partial (\llbracket K
  \rrbracket \hel \{f > t\})) \diff t \\
  & \qquad \qquad \qquad \qquad +
  \frac{C\alpha}{\lambda^{\frac{1}{\alpha} - 1}} \int_0^\infty
  \calL^d\left(\{ f > t \}\right) \diff t \\ & \leq C
  (1-\alpha)\lambda \bN(\llbracket K \rrbracket \hel f) +
  \frac{C\alpha}{\lambda^{\frac{1}{\alpha}-1}} \bF(\llbracket K
  \rrbracket \hel f)
  \end{align*}
  Choosing
  \[
  \lambda = \left( \frac{\bF(\llbracket K \rrbracket \hel
    f)}{\bN(\llbracket K \rrbracket \hel f)}\right)^{\alpha}.
  \]
  we obtain
  \[
  |\Upsilon^{-1}(\mu)(\llbracket K \rrbracket \hel f)| \leq C
  \bN(\llbracket K \rrbracket \hel f)^{1 - \alpha} \bF(\llbracket K
  \rrbracket \hel f)^\alpha.
  \]
  An inequality of the type above is easily obtained as well when we
  remove the restriction on the sign of $f$. This ends the proof that
  $\Upsilon^{-1}(\mu) \in \bCH^{d, \alpha}(K)$. We let the reader
  check that $\Upsilon$ and $\Upsilon^{-1}$ are reciprocal maps.
\end{Empty}

\begin{Empty}[Hölder differential form]
  We let $\rmLip^\alpha_{\rmloc}(\R^d, \bigwedge^m \R^d)$ the space of
  locally $\alpha$-Hölder continuous $m$-forms. It is a Fréchet space,
  when given the family of seminorms
  \[
  \|\omega\|_{\rmLip^\alpha(K, \bigwedge^m \R^d)} \defeq
  \max \left\{\|\omega\|_{\infty, K}, \rmLip^\alpha(\omega; K) \right\}
  \]
  indexed over all compact subsets $K$ of $\R^d$.

  We claim that, for $\omega \in \rmLip^\alpha_{\rmloc}(\R^d,
  \bigwedge^m \R^d)$, the corresponding charge is
  $\alpha$-fractional. We recall that $\Phi \colon \R^d\to \R$ be a
  smooth nonnegative even function, supported in the closed unit ball,
  with $\int_{\R^d} \Phi = 1$. For all $\varepsilon \in \mathopen{(}
  0, 1 \mathclose{]}$, we set $\Phi_\varepsilon(x) =
    \frac{1}{\varepsilon^d} \Phi \left( \frac{x}{\varepsilon} \right)$
    and
  \[
  \omega_\varepsilon (x) = \omega * \Phi_\varepsilon (x) = \int_{\R^d}
  \omega(y) \Phi_\varepsilon(x - y) \diff y.
  \]
  For each $x$ in a compact set $K$, we have
  \[
  \| \omega - \omega_\varepsilon \|_{\infty, K} \leq \rmLip^\alpha(\omega;
  K_1) \varepsilon^\alpha
  \]
  where $K_1 \defeq \{x \in \R^d : \rmdist(x, K) \leq 1 \}$. Moreover,
  $\omega_\varepsilon$ is smooth, and for all $i \in \{1, \dots, d\}$,
  we have
  \begin{align*}
  \partial_i \omega_\varepsilon(x) & = \frac{1}{\varepsilon^{d+1}}
  \int_{\R^d} \omega(y) \partial_i \Phi \left( \frac{x -
    y}{\varepsilon} \right) \diff y \\
  & = \frac{1}{\varepsilon^{d+1}}
  \int_{\R^d} ( \omega(y) - \omega(x) ) \partial_i \Phi \left( \frac{x -
    y}{\varepsilon} \right) \diff y
  \end{align*}
  from which we infer that
  \[
  |\partial_i \omega_\varepsilon(x)| \leq \rmLip^\alpha(\omega;
  K_1) \left( \int_{\R^d} |\partial_i \Phi| \right)
  \frac{1}{\varepsilon^{1 - \alpha}}.
  \]
  and thus,
  \[
  \| \diff \omega_\varepsilon \|_{\infty, K} \leq C
  \rmLip^\alpha(\omega, K_1) \frac{1}{\varepsilon^{1 -
      \alpha}}
  \]
  for some constant depending on $d$.  Next, for $T \in \bN_m(K)$, we
  estimate
  \[
  \omega(T) = \int_{K} \langle \omega(x) - \omega_\varepsilon(x),
  \vec{T}(x) \rangle \diff \|T\| + T(\omega_\varepsilon)
  \]
  From the above inequalities, we can control the first term
  \[
  \left| \int_{K} \langle \omega(x) - \omega_\varepsilon(x),
  \vec{T}(x) \rangle \diff \|T\| \right| \leq \rmLip^\alpha(\omega;
  K_1) \varepsilon^\alpha \bN(T)
  \]
  whereas, if we suppose furthermore that $K$ is a $1$-Lipschitz retract,
  the second term is controlled by
  \begin{align*}
  |T(\omega_\varepsilon)| & \leq \max\{ \| \omega_\varepsilon
  \|_{\infty, K}, \| \diff \omega_\varepsilon \|_{\infty, K} \} \bF(T)
  \\ & \leq \frac{1}{\varepsilon^{1 - \alpha}} \max \{ \| \omega
  \|_{\infty, K_1}, C \rmLip^\alpha(\omega; K_1) \} \bF(T)
  \end{align*}
  By choosing $\varepsilon = \bF(T)^\alpha / \bN(T)^\alpha$ (which is
  indeed less than 1) and combining the preceding inequalities, we
  obtain
  \[
  |\omega(T)| \leq C_K \bN(T)^{1 - \alpha} \bF(T)^\alpha,
  \]
  with $C_K \defeq \rmLip^\alpha(\omega; K_1) + \max \{\|
  \omega\|_{\infty, K_1}, C \rmLip^\alpha (\omega; K_1) \}$.
\end{Empty}

Next we will identify $\alpha$-fractional $0$-charges with (locally)
Hölder continuous functions.

\begin{Proposition}
  \label{prop:CH0alphaHolder}
  The map $\Gamma$ restricts to a Banach space isomorphism between
  $\bCH^{0, \alpha}(K)$ and $\rmLip^\alpha(K)$.
\end{Proposition}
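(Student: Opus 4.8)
The plan is to build on Theorem~\ref{thm:CH0C}, which already identifies $\bCH^0(K)$ with $C(K)$ through $\Gamma$. It will then be enough to establish the two inclusions $\Gamma(\bCH^{0,\alpha}(K))\subseteq\rmLip^\alpha(K)$ and $\Gamma^{-1}(\rmLip^\alpha(K))\subseteq\bCH^{0,\alpha}(K)$; I will in fact get both with explicit bounds on the operator norms, but even the bare inclusions would suffice, since the open mapping theorem then forces $\Gamma|_{\bCH^{0,\alpha}(K)}$ to be an isomorphism onto $\rmLip^\alpha(K)$.

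For the first inclusion I would simply test the defining inequality of $\bCH^{0,\alpha}(K)$ on elementary $0$-currents. Given $\omega\in\bCH^{0,\alpha}(K)$ and $x,y\in K$, take $T=\llbracket x\rrbracket-\llbracket y\rrbracket\in\bN_0(K)$: as $0$-currents have no boundary, $\bN(T)=\bM(T)=2$, while filling $T$ with the oriented segment from $y$ to $x$ (a normal $1$-current of mass $|x-y|$) shows $\bF(T)\leq|x-y|$. Hence $|\Gamma(\omega)(x)-\Gamma(\omega)(y)|=|\omega(T)|\leq 2^{1-\alpha}\,\|\omega\|_{\bCH^{0,\alpha}(K)}\,|x-y|^\alpha$; and testing on $\llbracket x\rrbracket$, where $\bN(\llbracket x\rrbracket)=1\geq\bF(\llbracket x\rrbracket)$, gives $\|\Gamma(\omega)\|_{\infty,K}\leq\|\omega\|_{\bCH^{0,\alpha}(K)}$. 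So $\Gamma(\omega)\in\rmLip^\alpha(K)$ with $\|\Gamma(\omega)\|_{\rmLip^\alpha(K)}\leq 2^{1-\alpha}\|\omega\|_{\bCH^{0,\alpha}(K)}$.

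The converse inclusion is the substantial part, and the natural approach is to reuse the mollification estimate already carried out in the ``Hölder differential form'' paragraph, now with $m=0$. Given $g\in\rmLip^\alpha(K)$, I would first extend it to $\tilde g\in\rmLip^\alpha(\R^d)$ with $\rmLip^\alpha(\tilde g)=\rmLip^\alpha(g;K)$ (a McShane-type extension for the snowflake metric $(x,y)\mapsto|x-y|^\alpha$). The point requiring care is that $\alpha$-fractionality is expressed through the flat norm $\bF$ over all of $\R^d$, which may be strictly smaller than $\bF_K$ unless $K$ is a $1$-Lipschitz retract (it fails already when $K$ is a two-point set), so one cannot run the estimate over $K$ directly. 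I would sidestep this by passing to the closed convex hull $\wh K$ of $K$: it is compact and, via nearest-point projection, a $1$-Lipschitz retract of $\R^d$, hence $\bF=\bF_{\wh K}$ on $\bN_0(\wh K)\supseteq\bN_0(K)$. Applying the cited estimate to the continuous $0$-form $\tilde g|_{\wh K}$ over $\wh K$ --- splitting $\omega(T)=\int_{\wh K}\langle\tilde g-\tilde g_\varepsilon,\vec T\rangle\,\diff\|T\|+T(\tilde g_\varepsilon)$ with $\tilde g_\varepsilon=\tilde g*\Phi_\varepsilon$, bounding the first term by $\rmLip^\alpha(\tilde g;\wh K_1)\varepsilon^\alpha\bN(T)$, the second by $C\max\{\|\tilde g\|_{\infty,\wh K_1},\rmLip^\alpha(\tilde g;\wh K_1)\}\varepsilon^{\alpha-1}\bF(T)$, and optimizing at $\varepsilon=\bF(T)/\bN(T)\in(0,1]$ --- yields a charge $\hat\omega\in\bCH^{0,\alpha}(\wh K)$ with $\|\hat\omega\|_{\bCH^{0,\alpha}(\wh K)}\leq C_K\|g\|_{\rmLip^\alpha(K)}$ (using $\|\tilde g\|_{\infty,\wh K_1}\leq\|g\|_{\infty,K}+(\diam K+1)^\alpha\rmLip^\alpha(g;K)$). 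Restricting $\hat\omega$ to $\bN_0(K)$ gives the charge attached to $g|_K$, which by injectivity of $\Gamma$ must be $\Gamma^{-1}(g)$, and the fractional estimate persists; thus $\Gamma^{-1}(g)\in\bCH^{0,\alpha}(K)$ with the same norm bound.

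Putting the two inclusions together, $\Gamma$ restricts to a bounded linear bijection $\bCH^{0,\alpha}(K)\to\rmLip^\alpha(K)$ with bounded inverse, as asserted. I expect the convex-hull maneuver of the previous paragraph to be the only genuinely delicate step; everything else is a direct transcription of estimates already present in the text, specialized to functions ($m=0$).
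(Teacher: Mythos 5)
Your forward direction is exactly the paper's: test on $\llbracket x\rrbracket-\llbracket y\rrbracket$, fill with the oriented segment to get $\bF\leq|x-y|$, and conclude $\rmLip^\alpha(\Gamma(\omega))\leq 2^{1-\alpha}\|\omega\|_{\bCH^{0,\alpha}(K)}$. The converse is where you genuinely diverge, and your route is correct. The paper stays intrinsic to $K$: given $f\in\rmLip^\alpha(K)$ it forms the infimal convolution $f_\varepsilon(x)=\min_y\{f(y)+\rmLip^\alpha(f)\,\varepsilon^{\alpha-1}d(x,y)\}$, which satisfies $\rmLip f_\varepsilon\leq\rmLip^\alpha(f)\,\varepsilon^{\alpha-1}$ and $\|f-f_\varepsilon\|_\infty\leq\rmLip^\alpha(f)\,\varepsilon^\alpha$, evaluates the charge on polyhedral $0$-chains, optimizes in $\varepsilon=\bF(T)/\bM(T)$, and then passes to general $T\in\bM_0(K)$ by polyhedral approximation with $\bM(T_n)\uparrow\bM(T)$. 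You instead extend $g$ to all of $\R^d$ by a snowflaked McShane extension and rerun the mollification estimate of the Hölder-form paragraph (with $m=0$) over the closed convex hull $\wh K$, which is a $1$-Lipschitz retract, then restrict to $\bN_0(K)$ and identify the result with $\Gamma^{-1}(g)$ via the injectivity of $\Gamma$. Both arguments must confront the same issue --- that fractionality is phrased with the global flat norm $\bF$ rather than $\bF_K$ --- and they do so differently: the paper implicitly extends its Lipschitz approximant $f_\varepsilon$ off $K$ when ``approximating by a smooth function'' (McShane again, this time for the Euclidean metric, preserving $\rmLip$ and $\|\cdot\|_\infty$ up to truncation), whereas you make the extension and the retract explicit and front-load them. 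Your version buys a cleaner reuse of an estimate already in the text and makes the $\bF$ versus $\bF_K$ point visible (your two-point example is apt); the paper's infimal-convolution version buys independence from the ambient geometry and an explicit constant $2(\|f\|_\infty+\rmLip^\alpha f)$. One small remark in your favor: you correctly take $\varepsilon=\bF(T)/\bN(T)$ in the optimization, which is what the balance of the two terms actually requires (the exponent $\alpha$ appearing on that ratio in the paper's Hölder-form paragraph is a typo).
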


\begin{proof}
  If $\omega$ is $\alpha$-fractional, then
  \begin{align*}
  |\Gamma(\omega)(x) - \Gamma(\omega)(y)| & \leq
  \|\omega\|_{\bCH^{0,\alpha}(K)} \bF(\llbracket x \rrbracket -
  \llbracket y \rrbracket)^\alpha \bM(\llbracket x \rrbracket -
  \llbracket y \rrbracket)^{1 - \alpha} \\ & \leq 2^{1 - \alpha}
  \|\omega\|_{\bCH^{0,\alpha}(K)} \lvert x - y \rvert^\alpha
  \end{align*}
  for all $x, y \in K$. Furthermore, $\|\Gamma(\omega)\|_\infty \leq
  \|\omega\|_{\bCH^0(K)} \leq \|\omega\|_{\bCH^{0,\alpha}(K)}$. This
  also shows that $\Gamma \colon \bCH^{0,\alpha}(K) \to
  \rmLip^\alpha(K)$ is continuous.
  
  Conversely, let $f \in \rmLip^\alpha(K)$ and $\omega =
  \Gamma^{-1}(f)$ the associated $0$-charge. Let $\varepsilon >
  0$. The function
  \[
  f_\varepsilon \colon x \in K \mapsto \min \left\{ f(y) +
  \frac{\rmLip^\alpha f}{\varepsilon^{1 - \alpha}} d(x, y) : y \in K
  \right\}
  \]
  belongs to $\rmLip(K)$, and $\rmLip f_\varepsilon \leq
  (\rmLip^\alpha f) \varepsilon^{\alpha-1}$. It is also clear that
  $f_\varepsilon(x) \leq f(x)$ for all $x \in K$. If $d(x, y) \geq
  \varepsilon$, then
  \[
  f(y) + \frac{\rmLip^\alpha f}{\varepsilon^{1 - \alpha}} d(x, y) \geq
  f(y) + (\rmLip^\alpha f) d(x, y)^\alpha \geq f(x)
  \]
  which implies that the minimum in the definition of
  $f_\varepsilon(x)$ is attained at a point $y \in K$ such that $d(x,
  y) \leq \varepsilon$, and
  \[
  f_\varepsilon(x) = f(y) + \frac{\rmLip^\alpha f}{\varepsilon^{1 -
      \alpha}} d(x, y) \geq f(y) \geq f(x) - (\rmLip^\alpha f)
  \varepsilon^\alpha.
  \]
  This shows that $\|f_\varepsilon - f\|_\infty \leq \rmLip^\alpha
  (f)\varepsilon^\alpha$.

  Let $T = \sum_{k=1}^n a_k \llbracket x_k \rrbracket$ be a
  $0$-polyhedral chain. Then
  \[
  \omega(T) = \sum_{k=1}^n a_k f_\varepsilon(x_k) + \sum_{k=1}^n a_k
  (f(x_k) - f_\varepsilon(x_k)).
  \]
  Approximating $f_\varepsilon$ with a smooth function, we prove that
  \[
  \left| \sum_{k=1}^n a_k f_\varepsilon(x_k) \right| \leq \max\{
  \rmLip f_\varepsilon, \| f_\varepsilon \|_\infty \} \bF(T).
  \]
  Henceforth, if $\varepsilon \leq 1$,
  \begin{align*}
  |\omega(T)| & \leq \max \left\{ \rmLip f_\varepsilon,
  \|f_\varepsilon\|_\infty \right\} \bF(T) + \|f -
  f_\varepsilon\|_\infty \bM(T) \\ & \leq \left( \|f\|_\infty +
  \rmLip^\alpha(f) \right)\left( \frac{1}{\varepsilon^{1 - \alpha}}
  \bF(T) + \varepsilon^\alpha \bM(T) \right).
  \end{align*}
  By choosing $\varepsilon = \bF(T) / \bM(T)$ (which is less than or
  equal to $1$), we obtain
  \[
  \lvert \omega(T)\rvert \leq 2 \left( \| f \|_\infty +
  \rmLip^\alpha(f) \right) \bF(T)^\alpha \bM(T)^{1 - \alpha}.
  \]
  The preceding identity also holds for any chain $T \in \bM_0(K)$, as
  it is the $\bF$-limit of a sequence $(T_n)$ of polyhedral $0$-chains
  such that $\bM(T_n) \uparrow \bM(T)$, and therefore, $\omega \in
  \bCH^{0,\alpha}(K)$. Finally, by the open mapping theorem, $\Gamma$
  is a Banach space isomorphism between $\bCH^{0,\alpha}(K)$ and
  $\rmLip^\alpha(K)$.
\end{proof}

\begin{Corollary}
  \label{cor:CH0alphaHolder}
  The map $\Gamma$ (from~\ref{e:chargesoverRd}(B)) restricts to a
  Fréchet space isomorphism between $\bCH^{0, \alpha}(\R^d)$ and
  $\rmLip^\alpha_{\rmloc}(\R^d)$.
\end{Corollary}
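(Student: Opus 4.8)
The plan is to deduce the corollary from its compact version, Proposition~\ref{prop:CH0alphaHolder}, combined with the Fréchet space isomorphism $\Gamma \colon \bCH^0(\R^d) \to C(\R^d)$ recorded in~\ref{e:chargesoverRd}(B). The one piece of book-keeping I would record at the outset is that the construction of $\Gamma$ is compatible with restriction to compact sets: if $\omega \in \bCH^0(\R^d)$ and $K \subset \R^d$ is compact, then $\omega|_{\bN_0(K)} \in \bCH^0(K)$ by the very definition of a charge over $\R^d$, and for every $x \in K$ one has $\Gamma\bigl(\omega|_{\bN_0(K)}\bigr)(x) = \omega(\llbracket x \rrbracket) = \Gamma(\omega)(x)$, so that $\Gamma\bigl(\omega|_{\bN_0(K)}\bigr) = \Gamma(\omega)|_K$. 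In particular, since $\Gamma \colon \bCH^0(K) \to C(K)$ is injective by Theorem~\ref{thm:CH0C}, a $0$-charge over $K$ is completely determined by its image under $\Gamma$. Note also that both $\bCH^{0,\alpha}(\R^d)$ and $\rmLip^\alpha_{\rmloc}(\R^d)$ are Fréchet spaces, their topologies being given by the countable families of seminorms indexed by an exhaustion of $\R^d$ by compact sets.

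First I would verify that $\Gamma$ carries $\bCH^{0,\alpha}(\R^d)$ continuously into $\rmLip^\alpha_{\rmloc}(\R^d)$. Let $\omega \in \bCH^{0,\alpha}(\R^d)$; for each compact $K$ we have $\omega|_{\bN_0(K)} \in \bCH^{0,\alpha}(K)$ by definition, hence by Proposition~\ref{prop:CH0alphaHolder} the function $\Gamma(\omega)|_K = \Gamma\bigl(\omega|_{\bN_0(K)}\bigr)$ lies in $\rmLip^\alpha(K)$, and the Banach space isomorphism furnished by that proposition yields a constant $C_K$ with $\max\{\|\Gamma(\omega)\|_{\infty,K}, \rmLip^\alpha(\Gamma(\omega); K)\} \leq C_K \|\omega\|_{\bCH^{0,\alpha}(K)}$. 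Since $K$ was arbitrary, $\Gamma(\omega) \in \rmLip^\alpha_{\rmloc}(\R^d)$, and as the two Fréchet topologies are defined by seminorms indexed by the same family of compact sets, the displayed estimate (valid for each $K$ with its own constant) shows $\Gamma$ is continuous.

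For the reverse direction I would argue symmetrically. Injectivity of the restricted map is inherited from that of $\Gamma \colon \bCH^0(\R^d) \to C(\R^d)$. For surjectivity, let $f \in \rmLip^\alpha_{\rmloc}(\R^d) \subset C(\R^d)$ and set $\omega = \Gamma^{-1}(f) \in \bCH^0(\R^d)$. For every compact $K$ we have $\Gamma\bigl(\omega|_{\bN_0(K)}\bigr) = f|_K \in \rmLip^\alpha(K)$, so Proposition~\ref{prop:CH0alphaHolder} produces a charge in $\bCH^{0,\alpha}(K)$ whose image under $\Gamma$ is $f|_K$; by the injectivity of $\Gamma$ on $\bCH^0(K)$ (Theorem~\ref{thm:CH0C}) this charge must equal $\omega|_{\bN_0(K)}$, whence $\omega|_{\bN_0(K)} \in \bCH^{0,\alpha}(K)$. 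As $K$ was arbitrary, $\omega \in \bCH^{0,\alpha}(\R^d)$, so $\Gamma$ is onto. Finally, Proposition~\ref{prop:CH0alphaHolder} also furnishes constants $C'_K$ with $\|\omega|_{\bN_0(K)}\|_{\bCH^{0,\alpha}(K)} \leq C'_K \max\{\|f\|_{\infty,K}, \rmLip^\alpha(f;K)\}$, which gives the continuity of $\Gamma^{-1}$ and completes the proof that $\Gamma$ restricts to a Fréchet space isomorphism. There is no genuine obstacle in this argument; the only points demanding care are the compatibility of $\Gamma$ with restriction and the use of Theorem~\ref{thm:CH0C} to identify $\omega|_{\bN_0(K)}$ with the charge supplied locally by Proposition~\ref{prop:CH0alphaHolder}.
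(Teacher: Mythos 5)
Your proof is correct and is essentially the argument the paper intends: the corollary is stated without proof precisely because it follows from Proposition~\ref{prop:CH0alphaHolder} by restricting to compact sets, using the compatibility of $\Gamma$ with restriction and the seminorm estimates on each $K$, exactly as you do. The one point you rightly flag — identifying $\omega|_{\bN_0(K)}$ with the charge produced locally via the injectivity of $\Gamma$ on $\bCH^0(K)$ — is handled correctly.
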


\begin{Empty}[An open problem]
  Proposition~\ref{prop:CH0alphaHolder} and
  Corollary~\ref{cor:CH0alphaHolder} completely describe
  $\alpha$-fractional charges in the case $m = 0$. In the
  zero-codimensional case $m = d$, there is a representation theorem
  for $\alpha$-fractional charges over $[0, 1]^d$ in
  \cite[Theorem~7.2]{Boua}, which characterizes them as the exterior
  derivatives of $\alpha$-Hölder $(d-1)$-forms over $K$ (a similar
  representation theorem for $d$-charges over $\R^d$ can be obtained
  by using partitions of unity).

  Given those results as well as the De Pauw-Moonens-Pfeffer
  representation theorem, one can conjecture that, in the middle cases
  $1 \leq m \leq d-1$, an arbitrary $\alpha$-fractional charge $\omega
  \in \bCH^{m, \alpha}(\R^d)$ can be (non uniquely) decomposed as a
  sum
  \[
  \omega = \eta_1 + \diff \eta_2, \text{ where } \eta_1 \in
  \rmLip^\alpha_{\rmloc}\left(\R^d; \bigwedge^m \R^d\right) \text{ and
  } \eta_2 \in \rmLip^\alpha_{\rmloc}\left(\R^d; \bigwedge^{m-1}
  \R^d\right).
  \]
\end{Empty}

\begin{Proposition}[Compactness]
  Any bounded sequence in $\bCH^{m, \alpha}(K)$ has a subsequence that
  converges in $\bCH^m(K)$ to an $\alpha$-fractional charge.
\end{Proposition}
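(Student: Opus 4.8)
The plan is to deduce this from the relative compactness criterion of Theorem~\ref{thm:compact} together with the lower semicontinuity of the fractional norm under weak convergence. Let $(\omega_n)$ be a sequence in $\bCH^{m,\alpha}(K)$ with $M \defeq \sup_n \|\omega_n\|_{\bCH^{m,\alpha}(K)} < \infty$, so that $|\omega_n(T)| \leq M \bN(T)^{1-\alpha}\bF(T)^\alpha$ for every $T \in \bN_m(K)$ and every $n$.

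First I would check that the family $\Omega \defeq \{\omega_n : n \geq 1\}$ satisfies condition (B) of Theorem~\ref{thm:compact}. When $\alpha = 1$ this is immediate with $\theta = M$. When $0 < \alpha < 1$, fix $\varepsilon > 0$ and apply the weighted arithmetic–geometric mean inequality in the scaled form
\[
\bN(T)^{1-\alpha}\bF(T)^\alpha = (\lambda \bN(T))^{1-\alpha}\bigl(\lambda^{-(1-\alpha)/\alpha}\bF(T)\bigr)^\alpha \leq (1-\alpha)\lambda \bN(T) + \alpha \lambda^{-(1-\alpha)/\alpha}\bF(T),
\]
valid for every $\lambda > 0$. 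Choosing $\lambda$ so that $M(1-\alpha)\lambda = \varepsilon$ gives $|\omega_n(T)| \leq \varepsilon \bN(T) + \theta \bF(T)$ for all $n$, with $\theta = M\alpha\lambda^{-(1-\alpha)/\alpha}$ depending only on $\varepsilon$, $M$ and $\alpha$, hence independent of $n$. By Theorem~\ref{thm:compact}, $\Omega$ is relatively compact in $\bCH^m(K)$, so some subsequence $(\omega_{n_k})$ converges in $\bCH^m(K)$ to a charge $\omega \in \bCH^m(K)$.

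It remains to see that $\omega$ is $\alpha$-fractional with $\|\omega\|_{\bCH^{m,\alpha}(K)} \leq M$. Convergence in $\bCH^m(K)$ implies weak convergence, since $|\omega_{n_k}(T) - \omega(T)| \leq \|\omega_{n_k} - \omega\|_{\bCH^m(K)}\,\bN(T) \to 0$ for each fixed $T \in \bN_m(K)$. Passing to the limit in the inequality $|\omega_{n_k}(T)| \leq M \bN(T)^{1-\alpha}\bF(T)^\alpha$ yields $|\omega(T)| \leq M \bN(T)^{1-\alpha}\bF(T)^\alpha$ for every $T$, i.e.\ $\omega \in \bCH^{m,\alpha}(K)$; alternatively one invokes directly the lower semicontinuity of $\|\cdot\|_{\bCH^{m,\alpha}(K)}$ with respect to weak convergence noted earlier. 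There is no genuine obstacle here: the only point requiring a little care is the Young-type estimate that converts the fractional bound into the split form demanded by Theorem~\ref{thm:compact}, and the observation that $\bCH^m(K)$-convergence forces pointwise convergence on $\bN_m(K)$.
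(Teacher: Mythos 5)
Your proof is correct and follows essentially the same route as the paper: Young's inequality converts the uniform fractional bound into the $\varepsilon \bN(T) + \theta \bF(T)$ form required by Theorem~\ref{thm:compact}, and lower semicontinuity of $\|\cdot\|_{\bCH^{m,\alpha}(K)}$ under weak convergence shows the limit is $\alpha$-fractional. The only difference is cosmetic (your explicit choice of $\lambda$ versus the paper's closed-form constant $M^{1/\alpha}\varepsilon^{-(1-\alpha)/\alpha}$).
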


\begin{proof}
  First observe that $\| \cdot \|_{\bCH^{m, \alpha}(K)}$ (defined on
  $\bCH^m(K)$ with values in $[0, \infty]$) is lower semi-continuous
  with respect to weak convergence. Therefore, we only need to check
  that a sequence $(\omega_n)$ that satisfies
  \[
  M \defeq \sup_n \| \omega_n \|_{\bCH^{m, \alpha}(K)} < \infty
  \]
  has a convergent subsequence in $\bCH^m(K)$. This is an easy
  consequence of the compactness criterion
  (Theorem~\ref{thm:compact}), as for any $T \in \bN_m(K)$, any
  integer $n$ and $\varepsilon > 0$, one has, by Young inequality,
  \[
  |\omega_n(T)| \leq \varepsilon \bN(T) +
  \frac{M^{1/\alpha}}{\varepsilon^{(1-\alpha)/\alpha}}
  \bF(T). \qedhere
  \]
\end{proof}

\begin{Empty}
  We say that a sequence $(\omega_n)$ in $\bCH^{m,\alpha}(K)$
  converges weakly-* to $\omega \in \bCH^{m, \alpha}(K)$ whenever
  \begin{itemize}
  \item[(A)] $(\omega_n)$ is bounded in $\bCH^{m, \alpha}(K)$;
  \item[(B)] $\omega_n \to \omega$ in $\bCH^m(K)$.
  \end{itemize}
  Using the preceding proposition, it is easy to prove that
  condition~(B) can be substituted with ``$\omega_n \to \omega$
  weakly''.

  There is a similar compactness result for charges over
  $\R^d$. Recall that the topology of $\bCH^{m, \alpha}(\R^d)$ is
  induced that the family of seminorms $\| \cdot
  \|_{\bCH^{m,\alpha}(K)}$. A sequence $(\omega_n)$ in
  $\bCH^{m,\alpha}(\R^d)$ is bounded whenever $\sup_n
  \|\omega_n\|_{\bCH^{m,\alpha}(K)} < \infty$ for every compact $K
  \subset \R^d$. It is enough to consider a countable family of
  compact subsets $K$ (for example the closed balls centered at the
  origin with an integer radius). By a diagonal argument, any bounded
  sequence in $\bCH^{m, \alpha}(\R^d)$ has a subsequence that
  converges in $\bCH^{m}(\R^d)$ to some $\alpha$-fractional
  charge. The notion of weak* convergence in $\bCH^{m, \alpha}(\R^d)$
  is easily adapted. We will use the compactness theorem in the
  following form.
\end{Empty}

\begin{Proposition}
  Let $(\omega_n)$ be a sequence in $\bCH^{m, \alpha}(\R^d)$ that is
  bounded and such that $\lim_n \omega_n(T)$ exists for all $T \in
  \bN_m(\R^d)$. Then $(\omega_n)$ converges weakly-* to an
  $\alpha$-fractional charge.
\end{Proposition}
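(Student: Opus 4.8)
The plan is to take the candidate limit to be $\omega(T) \defeq \lim_n \omega_n(T)$ for every $T \in \bN_m(\R^d)$, which exists by hypothesis. Linearity of $\omega$ is immediate from the linearity of each $\omega_n$ and of limits, so the real content is to show that $\omega$ is $\alpha$-fractional and that $\omega_n \to \omega$ in the weak-$*$ sense of $\bCH^{m,\alpha}(\R^d)$.

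First I would upgrade the pointwise bound to a fractional-charge estimate. Fix a compact set $K \subset \R^d$. Boundedness of $(\omega_n)$ in $\bCH^{m,\alpha}(\R^d)$ means $M_K \defeq \sup_n \|\omega_n\|_{\bCH^{m,\alpha}(K)} < \infty$, so for every $T \in \bN_m(K)$ one has $|\omega_n(T)| \leq M_K \bN(T)^{1-\alpha}\bF(T)^{\alpha}$ for all $n$. Letting $n \to \infty$ yields $|\omega(T)| \leq M_K \bN(T)^{1-\alpha}\bF(T)^{\alpha}$. Since $K$ was arbitrary, the restriction of $\omega$ to $\bN_m(K)$ lies in $\bCH^{m,\alpha}(K)$ for every compact $K$; in particular it satisfies the charge continuity property of Paragraph~\ref{e:charges}, so $\omega \in \bCH^{m,\alpha}(\R^d)$. (This step is nothing more than the lower semicontinuity of $\|\cdot\|_{\bCH^{m,\alpha}(K)}$ under weak convergence, already noted in the excerpt.)

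It then remains to verify the two defining conditions of weak-$*$ convergence in $\bCH^{m,\alpha}(\R^d)$: condition (A), boundedness, is exactly the hypothesis; for condition (B), that $\omega_n \to \omega$ in $\bCH^m(\R^d)$, I would use the standard subsequence argument based on the compactness result for $\bCH^{m,\alpha}(\R^d)$ recorded just before the statement. Indeed, given any subsequence $(\omega_{n_j})$, it is bounded in $\bCH^{m,\alpha}(\R^d)$ and hence has a further subsequence converging in $\bCH^m(\R^d)$ to some $\alpha$-fractional charge; that limit must coincide with $\omega$, since convergence in $\bCH^m(\R^d)$ forces convergence at each $T \in \bN_m(\R^d)$ and pointwise limits are unique. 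Thus every subsequence of $(\omega_n)$ has a sub-subsequence converging to $\omega$ in $\bCH^m(\R^d)$, whence $\omega_n \to \omega$ in $\bCH^m(\R^d)$. (Alternatively, one may directly invoke the remark that, for bounded sequences, condition (B) may be replaced by weak convergence, which holds here by the very definition of $\omega$.)

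I do not expect a genuine obstacle: the proof is a routine assembly of the compactness theorem with the lower semicontinuity of the fractional-charge norm. The only point deserving a little care is the logical order, namely that the notion of weak-$*$ convergence is only meaningful once the limit has been shown to be $\alpha$-fractional, so the fractionality of $\omega$ must be established before discussing the mode of convergence.
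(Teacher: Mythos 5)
Your proof is correct and is exactly the routine assembly the paper intends: the paper states this proposition without proof, relying on the preceding compactness result and the remark that, for bounded sequences, convergence in $\bCH^m$ may be replaced by weak convergence. Your identification of the limit via lower semicontinuity of $\|\cdot\|_{\bCH^{m,\alpha}(K)}$ and the subsequence argument in the metrizable space $\bCH^m(\R^d)$ are precisely the intended steps.
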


The smoothing of charges provides an example of weak*
convergence. Precise estimates are given in the next proposition.

\begin{Proposition}
  \label{prop:estimatesSmoothing}
  Let $\omega \in \bCH^{m, \alpha}(\R^d)$, let $K \subset \R^d$ be
  compact and $\varepsilon \in \mathopen{(} 0, 1 \mathclose{]}$. We have
  \begin{itemize}
  \item[(A)] $\| \omega * \Phi_\varepsilon \|_{\bCH^{m, \alpha}(K)}
    \leq \| \omega \|_{\bCH^{m, \alpha}(K_\varepsilon)}$;
  \item[(B)] $\| \omega * \Phi_\varepsilon \|_{\bCH^{m, 1}(K)} \leq C
    \varepsilon^{\alpha - 1} \| \omega \|_{\bCH^{m,
        \alpha}(K_\varepsilon)}$;
  \item[(C)] $\| \omega - \omega * \Phi_\varepsilon \|_{\bCH^m(K)}
    \leq C \varepsilon^\alpha \| \omega \|_{\bCH^{m,
        \alpha}(K_\varepsilon)}$.
  \end{itemize}
\end{Proposition}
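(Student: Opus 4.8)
The plan is to deduce all three inequalities directly from the definition of the $\bCH^{m,\alpha}$-seminorm together with the elementary properties of the smoothing operator $T \mapsto T*\Phi_\varepsilon$ collected in Proposition~\ref{prop:convT}. The starting remark, used in all three cases, is that since $\Phi$ is even we have $\check\Phi_\varepsilon = \Phi_\varepsilon$, so that $(\omega*\Phi_\varepsilon)(T) = \omega(T*\Phi_\varepsilon)$ for every $T \in \bN_m(\R^d)$; moreover $\omega*\Phi_\varepsilon$ is a (smooth) charge over $\R^d$ by Proposition~\ref{prop:smooth}, so the seminorms on the left-hand sides are meaningful. The second key point is that, by Proposition~\ref{prop:convT}(A) together with the bound $\bN(T*\Phi_\varepsilon)\leq\bN(T)$ of Proposition~\ref{prop:convT}(C), whenever $T\in\bN_m(K)$ the current $T*\Phi_\varepsilon$ belongs to $\bN_m(K_\varepsilon)$, and likewise $T - T*\Phi_\varepsilon\in\bN_m(K_\varepsilon)$; this is precisely why $\|\omega\|_{\bCH^{m,\alpha}(K_\varepsilon)}$ is the right quantity to feed on the right-hand sides.

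For (A) I would fix $T\in\bN_m(K)$, write $|(\omega*\Phi_\varepsilon)(T)| = |\omega(T*\Phi_\varepsilon)| \leq \|\omega\|_{\bCH^{m,\alpha}(K_\varepsilon)}\,\bN(T*\Phi_\varepsilon)^{1-\alpha}\bF(T*\Phi_\varepsilon)^{\alpha}$, and then apply $\bN(T*\Phi_\varepsilon)\leq\bN(T)$ and $\bF(T*\Phi_\varepsilon)\leq\bF(T)$ from Proposition~\ref{prop:convT}(C). For (B) I would run the same computation but estimate the factor $\bN(T*\Phi_\varepsilon)^{1-\alpha}$ via Proposition~\ref{prop:convT}(D), which gives $\bN(T*\Phi_\varepsilon)\leq C\varepsilon^{-1}\bF(T)$ since $\varepsilon\leq 1$, while keeping $\bF(T*\Phi_\varepsilon)\leq\bF(T)$; combining the exponents produces the factor $\varepsilon^{-(1-\alpha)} = \varepsilon^{\alpha-1}$ in front of $\bF(T)$, which is exactly the $\bCH^{m,1}$-continuity inequality (with the new constant $C^{1-\alpha}$). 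For (C) I would use $(\omega - \omega*\Phi_\varepsilon)(T) = \omega(T - T*\Phi_\varepsilon)$, and for $T\in\bN_m(K)$ with $\bN(T)\leq 1$ bound $\bN(T - T*\Phi_\varepsilon)\leq 2\bN(T)\leq 2$ (triangle inequality and Proposition~\ref{prop:convT}(C)) and $\bF(T - T*\Phi_\varepsilon)\leq\varepsilon\bN(T)\leq\varepsilon$ (Proposition~\ref{prop:convT}(E)); then $|\omega(T-T*\Phi_\varepsilon)|\leq 2^{1-\alpha}\varepsilon^{\alpha}\|\omega\|_{\bCH^{m,\alpha}(K_\varepsilon)}$, and taking the supremum over such $T$ gives the claim.

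I do not expect a serious obstacle here: the argument is essentially bookkeeping. The only point deserving a moment's attention is the support-and-finiteness check mentioned above---ensuring that $T*\Phi_\varepsilon$ and $T - T*\Phi_\varepsilon$ are genuine normal currents supported in $K_\varepsilon$---so that invoking $\|\omega\|_{\bCH^{m,\alpha}(K_\varepsilon)}$ is legitimate; everything else follows by inserting the estimates of Proposition~\ref{prop:convT} into the defining Hölder-type inequality for $\omega$.
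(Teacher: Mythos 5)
Your proposal is correct and follows essentially the same route as the paper: all three parts are obtained by writing $(\omega*\Phi_\varepsilon)(T)=\omega(T*\Phi_\varepsilon)$ (resp. $(\omega-\omega*\Phi_\varepsilon)(T)=\omega(T-T*\Phi_\varepsilon)$), noting the support lands in $K_\varepsilon$, and inserting the estimates of Proposition~\ref{prop:convT}(C), (D) and (E) into the defining H\"older inequality for $\omega$. The only cosmetic difference is in (C), where the paper bounds $\bN(T-T*\Phi_\varepsilon)^{1-\alpha}$ by $(\bN(T)+\bN(T*\Phi_\varepsilon))^{1-\alpha}\leq (2\bN(T))^{1-\alpha}$ and keeps a general $T$ rather than normalizing to $\bN(T)\leq 1$; the outcome is identical.
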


\begin{proof}
  Let $T \in \bN_m(K)$ be arbitrary. Regarding (A), we have
  \begin{equation}
    \label{eq:proofS}
    |\omega * \Phi_\varepsilon(T)| = |\omega(T * \Phi_\varepsilon)|
    \leq \| \omega \|_{\bCH^{m,\alpha}(K_\varepsilon)} \bN(T *
    \Phi_\varepsilon)^{1-\alpha} \bF(T * \Phi_\varepsilon)^\alpha
  \end{equation}
  because $\rmspt (T * \Phi_\varepsilon) \subset K_\varepsilon$. By
  Proposition~\ref{prop:convT}(C),
  \[
  |\omega * \Phi_\varepsilon(T)| \leq \| \omega
  \|_{\bCH^{m,\alpha}(K_\varepsilon)} \bN(T)^{1 - \alpha}
  \bF(T)^\alpha.
  \]
  Therefore $\|\omega * \Phi_\varepsilon\|_{\bCH^{m, \alpha}(K)} \leq
  \|\omega \|_{\bCH^{m, \alpha}(K_\varepsilon)}$.

  (B) is obtained by combining~\eqref{eq:proofS} with
  Proposition~\ref{prop:convT}(D).

  (C). This time,
  \begin{align*}
    | (\omega - \omega * \Phi_\varepsilon)(T) | & = |\omega(T - T *
    \Phi_\varepsilon)| \\ & \leq \| \omega \|_{\bCH^{m,
        \alpha}(K_\varepsilon)} \bF(T - T * \Phi_\varepsilon)^\alpha
    \bN(T - T * \Phi_\varepsilon)^{1 - \alpha} \\ & \leq C \| \omega
    \|_{\bCH^{m, \alpha}(K_\varepsilon)} \varepsilon^\alpha
    \bN(T)^\alpha \left( \bN(T) + \bN(T * \Phi_\varepsilon) \right)^{1
      - \alpha} & \text{Prop.~\ref{prop:convT}(E)} \\ & \leq C \|
    \omega \|_{\bCH^{m,\alpha}(K_\varepsilon)} \bN(T) &
    \text{Prop.~\ref{prop:convT}(C)}
  \end{align*}
  We conclude with the arbitrariness of $T$.
\end{proof}

We end this section with a (technical) proposition that gives the
$1$-fractional norm of a smooth form.

\begin{Proposition}
  \label{prop:CH1smooth}
  Suppose $\omega \in C^\infty(\R^d; \bigwedge^m \R^d)$ and $K \subset
  \R^d$ is a compact set such that
  \begin{enumerate}
  \item[(A)] Every nonempty open subset of $K$ has positive Lebesgue measure;
  \item[(B)] $K$ is a $1$-Lipschitz retract of $\R^d$.
  \end{enumerate}
  Then $\| \omega \|_{\bCH^{m,1}(K)} = \max \{ \| \omega \|_{\infty,
    K}, \| \diff \omega \|_{\infty, K} \}$.
\end{Proposition}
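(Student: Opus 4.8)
The plan is to establish the two bounds $\|\omega\|_{\bCH^{m,1}(K)}\le\max\{c_0,c_1\}$ and $\|\omega\|_{\bCH^{m,1}(K)}\ge\max\{c_0,c_1\}$ separately, writing $c_0:=\|\omega\|_{\infty,K}$ and $c_1:=\|\diff\omega\|_{\infty,K}$. I first record a reduction: hypothesis~(B) forces $K$ to be convex. Indeed, if $\pi\colon\R^d\to K$ is a nonexpansive retraction and $x,y\in K$, then the midpoint $z=\tfrac12(x+y)$ satisfies $|\pi(z)-x|=|\pi(z)-\pi(x)|\le|z-x|$ and likewise $|\pi(z)-y|\le|z-y|$; since $\overline{B}(x,|z-x|)\cap\overline{B}(y,|z-y|)=\{z\}$, this forces $\pi(z)=z\in K$, so $K$ is midpoint-convex, and being compact it is convex. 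Together with~(A) this gives $\rmint K\ne\emptyset$ (a convex set with empty interior lies in a hyperplane, hence is $\calL^d$-null, contradicting that $K$ is itself a nonempty relatively open subset of $K$), and therefore $\overline{\rmint K}=K$.

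For the upper bound, fix $\chi\in C^\infty_c(\R^d)$ with $0\le\chi\le1$ and $\chi\equiv1$ on a neighborhood of $K$, and let $T\in\bN_m(K)$. Since $\rmspt T\subset K$ we have $T(\omega)=T(\chi\omega)$, and $\diff(\chi\omega)=\diff\omega$ on a neighborhood of $K$, so $\max\{\|\chi\omega\|_{\infty,K},\|\diff(\chi\omega)\|_{\infty,K}\}=\max\{c_0,c_1\}$. If this maximum is positive then $(\chi\omega)/\max\{c_0,c_1\}$ is admissible in the definition of $\bF_K$, whence $T(\omega)\le\max\{c_0,c_1\}\,\bF_K(T)=\max\{c_0,c_1\}\,\bF(T)$, the last equality because $K$ is a $1$-Lipschitz retract; if the maximum is zero, then $\omega$ and $\diff\omega$ vanish on $K\supset\rmspt T$, so $T(\omega)=\int\langle\omega,\vec T\rangle\,d\|T\|=0$. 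Applying this to $\pm T$ gives $|\omega(T)|\le\max\{c_0,c_1\}\,\bF(T)$, hence $\|\omega\|_{\bCH^{m,1}(K)}\le\max\{c_0,c_1\}$.

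For the lower bound I test $\omega$ against small flat disks sitting inside $\rmint K$. To get $\|\omega\|_{\bCH^{m,1}(K)}\ge c_0$: fix $x_0\in\rmint K$ and a simple unit $m$-vector $\xi_0=v_1\wedge\cdots\wedge v_m$; for $r>0$ small enough that $\overline{B}(x_0,r)\subset\rmint K$, let $\llbracket D_r\rrbracket\in\bN_m(K)$ be the $m$-disk $B(x_0,r)\cap(x_0+\rmspan\{v_1,\dots,v_m\})$ with multiplicity $1$ and orientation $\xi_0$. Then $\bF(\llbracket D_r\rrbracket)\le\bM(\llbracket D_r\rrbracket)=\calH^m(D_r)$ (put $S=0$ in the infimum defining $\bF$), while
\[
\omega(\llbracket D_r\rrbracket)=\int_{D_r}\langle\omega(x),\xi_0\rangle\,d\calH^m(x)\ge\Big(\langle\omega(x_0),\xi_0\rangle-\sup_{\overline{B}(x_0,r)}\|\omega(\cdot)-\omega(x_0)\|\Big)\calH^m(D_r).
\]
Dividing and letting $r\to0$ (the oscillation term vanishes by continuity of $\omega$) gives $\|\omega\|_{\bCH^{m,1}(K)}\ge\langle\omega(x_0),\xi_0\rangle$; taking the supremum over simple unit $\xi_0$ — which is exactly the comass $\|\omega(x_0)\|$ by \cite[1.8.1]{Fede} — then over $x_0\in\rmint K$, and using $\overline{\rmint K}=K$ with continuity of $\omega$, yields $\ge c_0$. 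For $\|\omega\|_{\bCH^{m,1}(K)}\ge c_1$ the argument is identical one dimension higher: for $y_0\in\rmint K$ and a simple unit $(m+1)$-vector $\zeta_0$, take the $(m+1)$-disk $\llbracket D'_r\rrbracket\in\bN_{m+1}(K)$ at $y_0$ and test against $T_r:=\partial\llbracket D'_r\rrbracket\in\bN_m(K)$; by the definition of the boundary, $\omega(T_r)=\llbracket D'_r\rrbracket(\diff\omega)=\int_{D'_r}\langle\diff\omega(y),\zeta_0\rangle\,d\calH^{m+1}(y)\ge\big(\langle\diff\omega(y_0),\zeta_0\rangle-\mathrm{osc}\big)\calH^{m+1}(D'_r)$, whereas $\bF(T_r)\le\bM(\llbracket D'_r\rrbracket)=\calH^{m+1}(D'_r)$ (put $S=\llbracket D'_r\rrbracket$). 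This step is vacuous when $m=d$, in which case $\diff\omega\equiv0$ and $c_1=0$. Combining the three inequalities proves the proposition.

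The crux — and the reason for hypotheses~(A) and~(B) — lies entirely in the lower bound: it requires that $\omega$ and $\diff\omega$ be detected at \emph{every} point of $K$ by normal currents supported \emph{in} $K$, and a general compact $K$ may carry too few such currents; the statement is genuinely false otherwise, as the example $K=[0,1]\times\{0\}\subset\R^2$, $m=1$, $\omega=\diff x_2$ already shows (there $\|\omega\|_{\bCH^{1,1}(K)}=0$ but $\|\omega\|_{\infty,K}=1$). Convexity from~(B) plus full-dimensionality from~(A) is exactly what makes $\rmint K$ dense in $K$, so that arbitrarily small oriented disks fit inside $K$ near a dense set of points, while the retraction in~(B) is also what supplies the identity $\bF=\bF_K$ keeping the sup-norms in the upper bound confined to $K$.
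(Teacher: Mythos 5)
Your proof is correct, and the lower bound follows a genuinely different route from the paper's. For the upper bound you use the supremum formulation of $\bF_K$ with a cutoff, while the paper uses the infimum formulation ($T=A+\partial B$); these are interchangeable and both hinge on (B) giving $\bF=\bF_K$. The real divergence is in the lower bound. The paper keeps the currents full-dimensional: using (A) it replaces $\|\cdot\|_{\infty,K}$ by essential suprema and realizes them by $L^1$-duality against currents $\calL^d\wedge\zeta$ and $\partial(\calL^d\wedge\xi)$ with $\zeta,\xi$ supported in $K$ of unit $L^1$-norm, whose flat norms are at most $1$; no geometric information about $K$ beyond (A) is needed (though one should really take $\zeta,\xi$ smooth, or approximate, so that these test currents are normal). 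You instead first extract from (B) that $K$ is convex (the midpoint argument is the standard one and is correct), combine with (A) to get $\overline{\rmint K}=K$, and then detect $\omega(x_0)$ and $\diff\omega(y_0)$ pointwise by shrinking oriented $m$- and $(m+1)$-disks inside $\rmint K$, recovering the comass via the supremum over simple unit multivectors. What your approach buys is that the test currents are manifestly normal and the blow-up at a point is very transparent; what it costs is the reliance on convexity, i.e.\ on hypothesis (B), for the lower bound as well, whereas the paper's lower bound uses only (A) and would survive for compact sets satisfying (A) that are not Lipschitz retracts (where, as your own example $[0,1]\times\{0\}$ shows, it is (A) and not (B) that is doing the work on that side). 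Both arguments are complete; yours is a legitimate, slightly more geometric alternative.
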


\begin{proof}
  Hypothesis (B) guarantees that $\bF(T) = \bF_K(T)$ for $T \in
  \bN_m(K)$. Therefore, we consider $A \in \bN_m(K)$ and $B \in
  \bN_{m+1}(K)$ such that $T = A + \partial B$ and compute
  \[
  |\omega(T)| = |A(\omega)| + |B(\diff \omega)| \leq \left( \bM(A) +
  \bM(B) \right) \max \{ \| \omega \|_{\infty, K}, \| \diff \omega
  \|_{\infty, K} \}.
  \]
  Taking the infimum over $A, B$, one obtains that $|\omega(T)| \leq
  \bF(T) \max \{ \| \omega \|_{\infty, K}, \| \diff \omega \|_{\infty,
    K} \}$. This means that $\| \omega \|_{\bCH^{m,1}(K)} \leq \max \{
  \| \omega \|_{\infty, K}, \| \diff \omega \|_{\infty, K} \}$.

  Hypothesis (A) implies that the $\| \cdot \|_{\infty, K}$ seminorms
  can be replaced with essential suprema. One has then
  \begin{gather*}
  \| \omega \|_{\infty, K} = \sup_\zeta \int \langle \omega(x),
  \zeta(x) \rangle \diff x = \sup_\zeta \omega(\calL^d \wedge \zeta)
  \\ \| \diff \omega \|_{\infty, K} = \sup_\xi \int \langle (\diff
  \omega)(x), \xi(x) \rangle \diff x = \sup_{\xi}
  \omega\left(\partial(\calL^d \wedge \xi) \right)
  \end{gather*}
  where $\zeta$ (resp. $\xi$) ranges over the summable
  $m$-vectorfields (resp. $(m+1)$-vectorfields) supported in $K$ of
  $L^1$-norm 1. As $\bF(\calL^d \wedge \zeta) \leq 1$ and
  $\bF\left(\partial(\calL^d \wedge \xi) \right) \leq 1$, one finally
  proves the desired inequality.
\end{proof}

\begin{Corollary}
  \label{cor:411}
  If $\omega \in C^\infty(\R^d, \bigwedge^m \R^d)$, $\eta \in
  C^\infty(\R^d, \bigwedge^{m'} \R^d)$ and $K$ is a compact set that
  satisfies (A) and (B), then $\| \omega \wedge \eta
  \|_{\bCH^{m+m',1}(K)} \leq C \| \omega \|_{\bCH^{m, 1}(K)} \| \eta
  \|_{\bCH^{m', 1}(K)}$, for some constant $C$.
\end{Corollary}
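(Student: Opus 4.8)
The plan is to reduce the statement entirely to Proposition~\ref{prop:CH1smooth}. The wedge product $\omega \wedge \eta$ is again a smooth form, and the hypotheses (A), (B) concern only the set $K$, so Proposition~\ref{prop:CH1smooth} applies to each of $\omega$, $\eta$ and $\omega \wedge \eta$. It therefore suffices to prove the pointwise estimate
\[
\max\{\|\omega \wedge \eta\|_{\infty,K},\ \|\diff(\omega \wedge \eta)\|_{\infty,K}\}
\leq C\,\max\{\|\omega\|_{\infty,K},\ \|\diff\omega\|_{\infty,K}\}\,
\max\{\|\eta\|_{\infty,K},\ \|\diff\eta\|_{\infty,K}\},
\]
since plugging in the values of the three $\bCH^{\bullet,1}(K)$-norms given by that proposition turns this into the desired inequality with the same constant $C$.

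To prove the displayed estimate, I would first note that the exterior product is a bilinear map $\bigwedge^m \R^d \times \bigwedge^{m'} \R^d \to \bigwedge^{m+m'} \R^d$ between finite-dimensional spaces, hence bounded for the comass norms: there is $C_0 = C_0(d,m,m')$ with $|\zeta \wedge \zeta'| \leq C_0\,|\zeta|\,|\zeta'|$ for all covectors $\zeta, \zeta'$ (this can also be read off the mass/comass duality of \cite[1.8.1]{Fede}). Evaluating at each point of $K$ gives $\|\omega \wedge \eta\|_{\infty,K} \leq C_0 \|\omega\|_{\infty,K}\|\eta\|_{\infty,K}$, which controls the first entry of the left-hand maximum. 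For the second entry, the Leibniz rule $\diff(\omega \wedge \eta) = \diff\omega \wedge \eta + (-1)^m \omega \wedge \diff\eta$ together with the same comass bound yields $\|\diff(\omega \wedge \eta)\|_{\infty,K} \leq C_0\bigl(\|\diff\omega\|_{\infty,K}\|\eta\|_{\infty,K} + \|\omega\|_{\infty,K}\|\diff\eta\|_{\infty,K}\bigr)$, and each of the two products is at most $\max\{\|\omega\|_{\infty,K},\|\diff\omega\|_{\infty,K}\}\,\max\{\|\eta\|_{\infty,K},\|\diff\eta\|_{\infty,K}\}$. Taking $C = 2C_0$ finishes the argument.

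I do not expect any real obstacle: the proof is a short combination of Proposition~\ref{prop:CH1smooth} with submultiplicativity of the comass norm and the Leibniz rule. The only point requiring a moment's care is that Proposition~\ref{prop:CH1smooth} must legitimately be invoked for $\omega \wedge \eta$, which is fine since smoothness of the wedge is immediate and (A), (B) are assumptions on $K$ alone; and the precise value of the dimensional constant $C_0$ is irrelevant, as the statement only claims existence of $C$.
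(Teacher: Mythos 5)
Your proof is correct and is exactly the argument the paper intends: reduce via Proposition~\ref{prop:CH1smooth} to a pointwise bound, then combine the Leibniz rule $\diff(\omega\wedge\eta)=\diff\omega\wedge\eta+(-1)^m\omega\wedge\diff\eta$ with the comass submultiplicativity from \cite[1.8.1]{Fede}. The paper's own proof is a one-line version of this; you have simply spelled out the details.
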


\begin{proof}
  It is a consequence of the identity $\diff (\omega \wedge \eta) =
  \diff \omega \wedge \eta + (-1)^m \omega \wedge \diff \eta$. The
  reader interested in computing the constant may consult
  \cite[1.8.1]{Fede}.
\end{proof}

\section{A Littlewood-Paley type decomposition of fractional charges}
\label{sec:LPdec}

We wish to introduce a decomposition result for $\alpha$-fractional
charges into much more regular components (that are at least
$1$-fractional). It will be analogous to the decomposition of Hölder
functions that is well described in \cite[Appendix~B,~2.6]{Grom} that
we briefly recall here. Any $\alpha$-Hölder continuous function $f
\colon X \to \R$ (for $0 < \alpha < 1$) defined on a metric space can
be decomposed into Lipschitz parts $f = \sum_{n=0}^\infty f_n$, where,
for all $n \geq 0$,
\begin{itemize}
\item[(A)] $\|f_n\|_\infty \leq C 2^{-n \alpha}$;
\item[(B)] $\rmLip f_n \leq C 2^{n(1 - \alpha)}$;
\item[(C)] $\sum_{n=0}^\infty f_n(x)$ converges for some $x \in X$.
\end{itemize}
Here $C$ is a constant independent of $n$.  We can think of $f_n$ as
being the part of $f$ whose frequencies are localized around
$2^n$. The estimates (A) and (B) guarantee that $f_n$ is in
$\rmLip^\alpha(X)$, with $\rmLip^\alpha f_n \leq 2C$. In our future
decomposition of charges, the smooth components will be flat
cochains. We begin our study when the domain is a compact subset $K$
of $\R^d$, with an elementary lemma, that estimates the
$\alpha$-fractional norm of such a component.

\begin{Lemma}
  Let $\omega \in \bCH^{m,1}(K)$, $C \geq 0$ and $\varepsilon >
  0$. Suppose that 
  \begin{equation}
    \label{eq:Semmes}
    \|\omega\|_{\bCH^{m,1}(K)} \leq \frac{C}{\varepsilon^{1-\alpha}}
    \text{ and } \|\omega\|_{\bCH^m(K)} \leq C \varepsilon^\alpha.
  \end{equation}
  Then $\|\omega\|_{\bCH^{m, \alpha}(K)} \leq C$.
\end{Lemma}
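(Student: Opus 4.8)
The plan is to obtain the conclusion by a single Hölder-type interpolation between the two scale-dependent estimates in~\eqref{eq:Semmes}. Fix an arbitrary $T \in \bN_m(K)$ and first record the two elementary consequences of the hypotheses. Since $\omega \in \bCH^{m,1}(K)$, the definition of the $1$-fractional norm gives $|\omega(T)| \le \|\omega\|_{\bCH^{m,1}(K)}\,\bF(T) \le C\varepsilon^{\alpha-1}\bF(T)$, using that $\bN(T)^{1-1}\bF(T)^{1}=\bF(T)$. On the other hand, the definition of $\|\cdot\|_{\bCH^m(K)}$ gives $|\omega(T)| \le \|\omega\|_{\bCH^m(K)}\,\bN(T) \le C\varepsilon^{\alpha}\bN(T)$.

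Next I would multiply the $\alpha$-th power of the first bound by the $(1-\alpha)$-th power of the second:
\[
|\omega(T)| = |\omega(T)|^{\alpha}\,|\omega(T)|^{1-\alpha} \le \bigl(C\varepsilon^{\alpha-1}\bF(T)\bigr)^{\alpha}\bigl(C\varepsilon^{\alpha}\bN(T)\bigr)^{1-\alpha}.
\]
Expanding the right-hand side, the constants contribute $C^{\alpha}C^{1-\alpha}=C$, the powers of $\varepsilon$ contribute $\varepsilon^{\alpha(\alpha-1)}\varepsilon^{\alpha(1-\alpha)}=\varepsilon^{0}=1$, and what remains is $\bF(T)^{\alpha}\bN(T)^{1-\alpha}$. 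Hence $|\omega(T)| \le C\,\bN(T)^{1-\alpha}\bF(T)^{\alpha}$, and since $T$ was arbitrary this is exactly the assertion $\|\omega\|_{\bCH^{m,\alpha}(K)} \le C$.

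There is essentially no obstacle: the only point to check is that the powers of $\varepsilon$ cancel, which is forced because the exponent $\alpha-1$ of the first estimate is weighted by $\alpha$ while the exponent $\alpha$ of the second is weighted by $1-\alpha$, and $\alpha(\alpha-1)+(1-\alpha)\alpha=0$. The content of the lemma is thus conceptual rather than technical: a family of estimates that are ``good at scale $\varepsilon$'' — one controlling $\omega$ like a flat cochain up to the factor $\varepsilon^{\alpha-1}$, the other controlling it in charge norm up to the factor $\varepsilon^{\alpha}$ — automatically packages into a single scale-invariant $\alpha$-fractional bound. This is the discrete interpolation underlying the Littlewood--Paley reconstruction, and it is precisely what will later let the decomposition of Section~\ref{sec:LPdec} recover an $\alpha$-fractional charge from its $1$-fractional pieces.
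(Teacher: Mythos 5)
Your argument is correct and is essentially identical to the paper's proof: both write $|\omega(T)| = |\omega(T)|^{\alpha}|\omega(T)|^{1-\alpha}$, bound the first factor via the $1$-fractional estimate and the second via the charge-norm estimate, and observe that the powers of $\varepsilon$ cancel. Nothing is missing.
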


\begin{proof}
  For any $T \in \bN_m(K)$,
  \begin{align*}
  |\omega(T)| & \leq |\omega(T)|^\alpha |\omega(T)|^{1 - \alpha} \\ &
  \leq \left( \frac{C}{\varepsilon^{1 - \alpha}} \right)^{\alpha} C^{1
    - \alpha} \varepsilon^{\alpha(1-\alpha)} \bF(T)^{\alpha} \bN(T)^{1
    - \alpha} \\ & \leq C \bF(T)^\alpha \bN(T)^{1 - \alpha}. \qedhere
  \end{align*}
\end{proof}

\begin{Proposition}
  \label{prop:chLP}
  Suppose $0 < \alpha < 1$. Let $(\omega_n)$ be a sequence in
  $\bCH^{m,1}(K)$ such that
  \[
  \|\omega_n\|_{\bCH^{m,1}(K)} \leq C 2^{n(1 - \alpha)} \text{ and }
  \|\omega_n\|_{\bCH^m(K)} \leq \frac{C}{2^{n\alpha}}
  \]
  for some $C \geq 0$ and for all $n$. Then $\sum_{n=0}^\infty
  \omega_n$ converges weakly-* to a charge in $\bCH^{m,\alpha}(K)$ and
  \[
  \left\| \sum_{n=0}^\infty \omega_n \right\|_{\bCH^{m,\alpha}(K)} \leq
  C_{\ref{prop:chLP}} C
  \]
  where $C_{\ref{prop:chLP}} = C_{\ref{prop:chLP}}(\alpha)$ is a
  constant.
\end{Proposition}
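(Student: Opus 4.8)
The plan is, for a fixed normal current $T \in \bN_m(K)$, to control the full series $\sum_n |\omega_n(T)|$ by interpolating the two estimates available for each term. Unwinding the definitions of $\|\cdot\|_{\bCH^{m,1}(K)}$ and $\|\cdot\|_{\bCH^m(K)}$, the hypotheses give, for every $n \geq 0$,
\[
|\omega_n(T)| \leq C\, 2^{n(1-\alpha)}\, \bF(T) \qquad \text{and} \qquad |\omega_n(T)| \leq C\, 2^{-n\alpha}\, \bN(T).
\]
The first bound is the useful one for small $n$, the second for large $n$, and the crossover sits at $2^n \approx \bN(T)/\bF(T)$, a ratio that is always at least $1$ because $\bF \leq \bN$. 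If $\bF(T) = 0$ the first bound forces $\omega_n(T) = 0$ for all $n$ and there is nothing to do; otherwise I choose the integer $N \geq 0$ with $2^N \leq \bN(T)/\bF(T) < 2^{N+1}$, use the first estimate for $0 \leq n \leq N$ and the second for $n > N$, and sum the two resulting geometric series.

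Both tails telescope: $\sum_{n=0}^N 2^{n(1-\alpha)} \leq \frac{2^{1-\alpha}}{2^{1-\alpha}-1}\, 2^{N(1-\alpha)}$ and $\sum_{n>N} 2^{-n\alpha} \leq \frac{1}{1-2^{-\alpha}}\, 2^{-(N+1)\alpha}$. Feeding in $\bF(T)\,2^N \leq \bN(T)$ on the first and $\bN(T) < \bF(T)\,2^{N+1}$ on the second, each piece collapses to a multiple of $\bF(T)^\alpha\,\bN(T)^{1-\alpha}$, and I obtain
\[
\sum_{n=0}^\infty |\omega_n(T)| \leq C_{\ref{prop:chLP}}(\alpha)\, C\, \bF(T)^\alpha\, \bN(T)^{1-\alpha}, \qquad C_{\ref{prop:chLP}}(\alpha) \defeq \frac{2^{1-\alpha}}{2^{1-\alpha}-1} + \frac{1}{1-2^{-\alpha}},
\]
which depends on $\alpha$ alone (this is the only place the assumption $\alpha < 1$ enters, to keep $2^{1-\alpha} - 1 > 0$). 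Hence $\sum_n \omega_n(T)$ converges absolutely for every $T$; setting $\omega(T) \defeq \sum_{n=0}^\infty \omega_n(T)$ defines a linear functional on $\bN_m(K)$, and the displayed inequality says exactly that $\omega \in \bCH^{m,\alpha}(K)$ with $\|\omega\|_{\bCH^{m,\alpha}(K)} \leq C_{\ref{prop:chLP}}\, C$.

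Finally I would verify that the partial sums $S_N = \sum_{n=0}^N \omega_n$ converge weakly-* to $\omega$ in the sense introduced after the Compactness Proposition. The uniform bound on $\bCH^{m,\alpha}(K)$ is immediate, since $|S_N(T)| \leq \sum_{n=0}^\infty |\omega_n(T)| \leq C_{\ref{prop:chLP}}\, C\, \bF(T)^\alpha \bN(T)^{1-\alpha}$ for every $N$; and the absolute convergence of the series gives $S_N(T) \to \omega(T)$ for all $T \in \bN_m(K)$, that is, $S_N \to \omega$ weakly, which by the remark there is what is needed. I do not expect a real obstacle: the core is a one-line interpolation followed by a geometric-series estimate. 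The only point requiring a little care is that the splitting index $N$ must be a genuine nonnegative integer --- guaranteed precisely by $\bF(T) \leq \bN(T)$ --- and that the degenerate case $\bF(T) = 0$ be dealt with separately at the outset.
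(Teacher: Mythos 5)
Your proposal is correct and follows essentially the same route as the paper: interpolate the two bounds term by term, split the series at the index where $2^n$ crosses $\bN(T)/\bF(T)$ (possible since $\bF \leq \bN$), sum the two geometric tails to get the $\bF(T)^\alpha \bN(T)^{1-\alpha}$ bound with the same constant, and deduce weak-* convergence from the uniform bound on the partial sums together with pointwise convergence. The only cosmetic difference is that you treat the case $\bF(T)=0$ explicitly where the paper simply assumes $T$ nonzero.
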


Note that the convergence of the series $\sum_{n=0}^\infty \omega_n$
is not strong in $\bCH^{m,\alpha}(K)$, but only weak*, a fact that
should evoke some sort of orthogonality of the components $\omega_n$.

\begin{proof}
  Let $T \in \bN_m(K)$ be nonzero and $p \geq 1$. We have
  \[
  \left\lvert \sum_{n=0}^{p-1} \omega_n(T) \right\rvert
  \leq C \bF(T) \sum_{n=0}^{p-1} 2^{n(1-\alpha)} \leq \frac{C 2^{p(1 -
      \alpha)}}{2^{1-\alpha} - 1} \bF(T).
  \]
  On the other hand,
  \[
  \left\lvert \sum_{n=p}^{\infty} \omega_n(T) \right\rvert \leq C
  \bN(T) \sum_{n=p}^\infty 2^{-n\alpha} \leq \frac{C 2^{-p\alpha}}{1
    - 2^{-\alpha}} \bN(T)
  \]
  Now, choose $p \geq 1$ such that
  \[
  2^{p-1} \leq \frac{\bN(T)}{\bF(T)} \leq 2^{p}.
  \]
  This is possible because $\bN(T) \geq \bF(T)$.  Then
  \[
  \left\lvert \sum_{n=0}^{\infty} \omega_n(T) \right\rvert \leq \left(
  \frac{2^{1 - \alpha}}{2^{1-\alpha} - 1} + \frac{1}{1 -
    2^{-\alpha}}\right) C \bF(T)^\alpha \bN(T)^{1 - \alpha}
  \]
  This shows that $\sum_{n=0}^\infty \omega_n \in
  \bCH^{m,\alpha}(K)$. The preceding argument shows that the sequence
  of partial sums $\left( \sum_{n=0}^p \omega_n \right)_{p \geq 0}$ is
  bounded in $\bCH^{m,\alpha}(K)$, and from there, it is easy to see
  that the convergence is weak-*.
\end{proof}

When adapted to charges over $\R^d$, the preceding proposition takes
the following form.

\begin{Corollary}
  \label{cor:LP}
  Suppose $0 < \alpha < 1$. Let $(\omega_n)$ be a sequence in
  $\bCH^{m,1}(\R^d)$ such that, for each compact $K \subset \R^d$,
  there is $C_K \geq 0$ such that
  \[
  \|\omega_n\|_{\bCH^{m,1}(K)} \leq C_K 2^{n(1 - \alpha)} \text{ and }
  \|\omega_n\|_{\bCH^m(K)} \leq \frac{C_K}{2^{n\alpha}}
  \]
  for all $n$. Then $\sum_{n=0}^\infty \omega_n$ converges weakly-* to
  a charge in $\bCH^{m,\alpha}(\R^d)$.
\end{Corollary}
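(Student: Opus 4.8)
The plan is to reduce everything to Proposition~\ref{prop:chLP} applied over each compact set and then to glue the resulting charges together, exploiting the fact that every normal current over $\R^d$ has compact support, so that $\bN_m(\R^d) = \bigcup_K \bN_m(K)$.

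First I would fix a compact $K \subset \R^d$. Since each $\omega_n$ belongs to $\bCH^{m,1}(\R^d)$, its restriction to $\bN_m(K)$ lies in $\bCH^{m,1}(K)$, and by hypothesis $\|\omega_n\|_{\bCH^{m,1}(K)} \leq C_K 2^{n(1-\alpha)}$ and $\|\omega_n\|_{\bCH^m(K)} \leq C_K 2^{-n\alpha}$. Proposition~\ref{prop:chLP} then applies and yields that $\sum_{n=0}^\infty \omega_n$ converges weakly-* on $\bN_m(K)$ to a charge $\sigma_K \in \bCH^{m,\alpha}(K)$ with $\|\sigma_K\|_{\bCH^{m,\alpha}(K)} \leq C_{\ref{prop:chLP}} C_K$. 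By the very meaning of weak-* convergence, this means that the partial sums $S_p \defeq \sum_{n=0}^p \omega_n$, restricted to $\bN_m(K)$, form a sequence that is bounded in $\bCH^{m,\alpha}(K)$ (with $\sup_p \|S_p\|_{\bCH^{m,\alpha}(K)} \leq C_{\ref{prop:chLP}} C_K$) and satisfies $S_p(T) \to \sigma_K(T)$ for every $T \in \bN_m(K)$; alternatively, the tail estimate $\bigl|\sum_{n=p}^\infty \omega_n(T)\bigr| \leq (1 - 2^{-\alpha})^{-1} C_K 2^{-p\alpha} \bN(T)$ from the proof of Proposition~\ref{prop:chLP} shows directly that the numerical series $\sum_n \omega_n(T)$ is Cauchy, hence convergent.

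Next I would observe that these limits are consistent: if $K \subset K'$, then weak-* convergence on $\bN_m(K')$ entails pointwise convergence, so $\sigma_{K'}(T) = \lim_p S_p(T) = \sigma_K(T)$ for all $T \in \bN_m(K)$. Consequently $\sigma(T) \defeq \lim_p S_p(T) = \lim_p \sum_{n=0}^p \omega_n(T)$ is a well-defined linear functional on $\bN_m(\R^d)$ (the limit exists because any $T$ is supported in some compact $K$), and $\sigma|_{\bN_m(K)} = \sigma_K \in \bCH^{m,\alpha}(K)$ for every compact $K$, which is precisely the statement that $\sigma \in \bCH^{m,\alpha}(\R^d)$. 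Finally, $S_p \to \sigma$ weakly-* in $\bCH^{m,\alpha}(\R^d)$: the sequence $(S_p)$ is bounded in every seminorm $\|\cdot\|_{\bCH^{m,\alpha}(K)}$ by the uniform bound above, and $S_p(T) \to \sigma(T)$ for all $T \in \bN_m(\R^d)$ by construction, so the $\R^d$-version of the compactness result for $\bCH^{m,\alpha}$ upgrades this weak convergence to convergence in $\bCH^m(\R^d)$ with an $\alpha$-fractional limit. There is no genuine obstacle here; the only point requiring a little care is the assembly step, i.e.\ checking that the pointwise limits over the nested family of compact sets patch together into a single charge over $\R^d$ carrying the correct local fractional bounds.
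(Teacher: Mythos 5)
Your proof is correct and follows exactly the route the paper intends: the paper states this corollary as an immediate adaptation of Proposition~\ref{prop:chLP} without writing out the localization, and your argument supplies precisely that missing glue (apply the proposition on each compact $K$, note the pointwise limits are consistent on nested compacts since every normal current has compact support, and invoke the $\R^d$-version of the compactness result to get weak-* convergence). No gaps.
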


\begin{Empty}
  Of course, it is enough to check the hypothesis of
  Corollary~\ref{cor:LP} when $K$ ranges over (non degenerate) closed
  balls. A Littlewood-Paley type decomposition of a fractional charge
  $\omega \in \bCH^{m, \alpha}(\R^d)$ can be obtained by convolution
  \[
  \omega = \omega * \Phi_{1} + \sum_{n=0}^\infty \left( \omega *
  \Phi_{2^{-(n+1)}} - \omega * \Phi_{2^{-n}} \right).
  \]
  We claim that the weak* convergence above is ensured by
  Corollary~\ref{cor:LP} and the estimates of
  Proposition~\ref{prop:estimatesSmoothing}. This decomposition will
  play a pivotal role in the forthcoming proof of
  Theorem~\ref{thm:main}.
\end{Empty}

\section{Main result}

\begin{Theorem}
  \label{thm:main}
  Let $\alpha, \beta$ be parameters such that $0 < \alpha, \beta \leq
  1$ and $\alpha + \beta > 1$. There is a unique map
  \[
  \wedge \colon \bCH^{m, \alpha}(\R^d) \times \bCH^{m', \beta}(\R^d)
  \to \bCH^{m+m', \alpha + \beta - 1}(\R^d)
  \]
  such that
  \begin{enumerate}
  \item[(A)] $\wedge$ extends the pointwise exterior product between smooth forms;
  \item[(B)] Weak*-to-weak* continuity: if $(\omega_n)$ and $(\eta_n)$
    are two sequences that converge weakly-* to $\omega$ and $\eta$ in
    $\bCH^{m, \alpha}(\R^d)$ and $\bCH^{m', \beta}(\R^d)$
    respectively, then $\omega_n \wedge \eta_n$ converge weakly-* to
    $\omega \wedge \eta$ in $\bCH^{m+m', \alpha + \beta - 1}(\R^d)$.
  \end{enumerate}
  Moreover, $\wedge$ is continuous.
\end{Theorem}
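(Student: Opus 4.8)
The plan is to build $\omega\wedge\eta$ as a Bony-type paraproduct on top of the Littlewood--Paley decomposition of Section~\ref{sec:LPdec}, to show that this construction satisfies (A) and (B), and then to deduce uniqueness (hence well-definedness — in particular independence of the mollifier $\Phi$) and continuity from (A), (B) together with the smoothing estimates and the compactness results already available. Set $\gamma\defeq\alpha+\beta-1$; we may assume $\gamma<1$, the case $\alpha=\beta=1$ (so $\gamma=1$) requiring only that the weak-$*$ convergence coming from Proposition~\ref{prop:chLP}/Corollary~\ref{cor:LP} be replaced by the norm convergence of $\sum_n\sigma_n$ in $\bCH^{m+m',1}(K)$, which is then summable.

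The first ingredient is a refinement of Corollary~\ref{cor:411}: for smooth forms $a\in C^\infty(\R^d;\bigwedge^m\R^d)$, $b\in C^\infty(\R^d;\bigwedge^{m'}\R^d)$ and a closed ball $K$,
\[
\|a\wedge b\|_{\bCH^{m+m'}(K)}\le C\|a\|_{\bCH^{m,1}(K)}\|b\|_{\bCH^{m'}(K)}
\quad\text{and}\quad
\|a\wedge b\|_{\bCH^{m+m'}(K)}\le C\|a\|_{\bCH^{m}(K)}\|b\|_{\bCH^{m',1}(K)} .
\]
For the first, write $(a\wedge b)(T)=T(a\wedge b)=b(T\hel a)$ for $T\in\bN_{m+m'}(K)$; the Leibniz rule $\partial(T\hel a)=\pm\bigl((\partial T)\hel a-T\hel\diff a\bigr)$ gives $\bN(T\hel a)\le C\max\{\|a\|_{\infty,K},\|\diff a\|_{\infty,K}\}\bN(T)=C\|a\|_{\bCH^{m,1}(K)}\bN(T)$ (Proposition~\ref{prop:CH1smooth}, valid since balls satisfy its hypotheses), whence $|(a\wedge b)(T)|\le\|b\|_{\bCH^{m'}(K)}\bN(T\hel a)\le C\|a\|_{\bCH^{m,1}(K)}\|b\|_{\bCH^{m'}(K)}\bN(T)$; the second estimate is symmetric, via $a\wedge b=\pm b\wedge a$ and $a(T\hel b)$. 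Next, fix the convolution-based decompositions $\omega=\sum_{p\ge0}\omega_p$, $\eta=\sum_{q\ge0}\eta_q$ of Section~\ref{sec:LPdec}, put $S_q\omega\defeq\omega*\Phi_{2^{-q}}=\sum_{j\le q}\omega_j$ (and likewise $S_q\eta$), and split the formal double sum along $\{p<q\}\sqcup\{p=q\}\sqcup\{p>q\}$:
\[
\omega\wedge\eta\;\defeq\;\underbrace{\sum_{q\ge1}(S_{q-1}\omega)\wedge\eta_q}_{\Pi_1}\;+\;\underbrace{\sum_{q\ge0}\omega_q\wedge\eta_q}_{\Pi_0}\;+\;\underbrace{\sum_{p\ge1}\omega_p\wedge(S_{p-1}\eta)}_{\Pi_2}.
\]

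Each summand is a smooth form, hence lies in $\bCH^{m+m',1}(\R^d)$. Proposition~\ref{prop:estimatesSmoothing}(B),(C) give, for each closed ball $K$, $\|S_q\omega\|_{\bCH^{m,1}(K)}\le C2^{q(1-\alpha)}\|\omega\|_{\bCH^{m,\alpha}(K_1)}$, $\|\omega_q\|_{\bCH^{m,1}(K)}\le C2^{q(1-\alpha)}\|\omega\|_{\bCH^{m,\alpha}(K_1)}$ and $\|\omega_q\|_{\bCH^m(K)}\le C2^{-q\alpha}\|\omega\|_{\bCH^{m,\alpha}(K_1)}$, plus the analogues for $\eta,\beta$. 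Feeding these into Corollary~\ref{cor:411} (for the $\bCH^{\cdot,1}$ bound) and into the two mixed estimates above (the first for $\Pi_1$ and $\Pi_0$, the second for $\Pi_2$), the $n$-th summand $\sigma_n^{(i)}$ of $\Pi_i$ satisfies $\|\sigma_n^{(i)}\|_{\bCH^{m+m',1}(K)}\le C_K2^{n(1-\gamma)}$ and $\|\sigma_n^{(i)}\|_{\bCH^{m+m'}(K)}\le C_K2^{-n\gamma}$ with $C_K=C_K(\alpha,\beta)\|\omega\|_{\bCH^{m,\alpha}(K_1)}\|\eta\|_{\bCH^{m',\beta}(K_1)}$ — the exponent bookkeeping being $2^{q(1-\alpha)}2^{q(1-\beta)}=2^{q(1-\gamma)}$ and $2^{q(1-\alpha)}2^{-q\beta}=2^{q(1-\alpha)-q\beta}=2^{-q\gamma}$ (resp.\ $2^{-q\alpha}2^{q(1-\beta)}=2^{-q\gamma}$), and it is exactly here that $\alpha+\beta>1$ is used, namely to make $\gamma>0$ so that $2^{-n\gamma}$ is summable. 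Corollary~\ref{cor:LP} then shows each $\Pi_i$ converges weak-$*$ to an element of $\bCH^{m+m',\gamma}(\R^d)$; re-running the estimate in the proof of Proposition~\ref{prop:chLP} on each ball yields the bilinear bound $\|\omega\wedge\eta\|_{\bCH^{m+m',\gamma}(K)}\le C_K(\alpha,\beta)\|\omega\|_{\bCH^{m,\alpha}(K_1)}\|\eta\|_{\bCH^{m',\beta}(K_1)}$, which is the asserted continuity of $\wedge$ (joint continuity of a bilinear map between Fréchet spaces whose seminorms are so dominated). Bilinearity is immediate since $\omega_p$ depends linearly on $\omega$ and weak-$*$ limits are linear.

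It remains to verify (A), (B) and uniqueness. If $\omega,\eta$ are smooth then $\sum_p\omega_p$, $\sum_q\eta_q$ converge to $\omega,\eta$ uniformly on compacta with all derivatives, and $\|\omega_p\|_{\infty,K}\le C2^{-p}$, $\|\eta_q\|_{\infty,K}\le C2^{-q}$; hence $\sum_{p,q}\|\omega_p\wedge\eta_q\|_{\infty,K}<\infty$, so $\sum_{p,q}\omega_p\wedge\eta_q$ converges absolutely in $\|\cdot\|_{\infty,K}$, its sum is the pointwise product $\omega\wedge\eta$, and by rearrangement it equals $\Pi_1+\Pi_0+\Pi_2$ — this gives (A). For (B), let $\omega_n\to\omega$, $\eta_n\to\eta$ weak-$*$; the bilinear bound makes $(\omega_n\wedge\eta_n)$ bounded in $\bCH^{m+m',\gamma}(\R^d)$, so by weak-$*$ sequential compactness of bounded sets in $\bCH^{m+m',\gamma}(\R^d)$ it suffices to prove $(\omega_n\wedge\eta_n)(T)\to(\omega\wedge\eta)(T)$ for each fixed $T\in\bN_{m+m'}(\R^d)$, say $\rmspt T\subset K$. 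For fixed $\varepsilon$, $\omega_n*\Phi_\varepsilon\to\omega*\Phi_\varepsilon$ uniformly on $K$ (these smooth forms have, by Proposition~\ref{prop:smooth}, $C^1(K)$-norms bounded uniformly in $n$, since $\sup_n\|\omega_n\|_{\bCH^{m,\alpha}(K_\varepsilon)}<\infty$, and they converge pointwise), and likewise for $\eta_n$; thus the $q$-th summand of each $\Pi_i^{(n)}$ evaluated at $T$ converges to that of $\Pi_i$, while the estimates above — uniform in $n$ — provide, through $|\sigma(T)|\le\min\{\|\sigma\|_{\bCH^{m+m',1}(K)}\bF(T),\;\|\sigma\|_{\bCH^{m+m'}(K)}\bN(T)\}$ and the usual splitting at $2^{n_0}\approx\bN(T)/\bF(T)$, a summable majorant independent of $n$; dominated convergence for series then gives $\Pi_i^{(n)}(T)\to\Pi_i(T)$, hence (B). Finally, if $\wedge'$ also satisfies (A) and (B) it agrees with $\wedge$ on smooth forms by (A), and for general $\omega,\eta$ one takes $\omega*\Phi_{1/n}\to\omega$, $\eta*\Phi_{1/n}\to\eta$ weak-$*$ (Proposition~\ref{prop:estimatesSmoothing}) and passes to the limit in the equality $(\omega*\Phi_{1/n})\wedge'(\eta*\Phi_{1/n})=(\omega*\Phi_{1/n})\wedge(\eta*\Phi_{1/n})$ using (B); this proves uniqueness, and in particular that $\wedge$ is independent of $\Phi$ and of the chosen splitting. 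The step I expect to be the crux is the paraproduct estimate: arranging the double sum into three series each meeting the two-sided growth/decay hypotheses of Corollary~\ref{cor:LP} forces the mixed product estimate above (resting on Proposition~\ref{prop:CH1smooth} and the behaviour of $\partial$ on $T\hel a$), and it is the only place where the Young condition $\alpha+\beta>1$ enters.
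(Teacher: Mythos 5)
Your construction is essentially the paper's: the same key estimate $\bN(T \hel a) \leq C \max\{\|a\|_{\infty,K}, \|\diff a\|_{\infty,K}\}\,\bN(T)$ (via the Leibniz rule for $\partial(T\hel a)$ and Proposition~\ref{prop:CH1smooth}) drives the paraproduct bounds, and your three-term Bony split $\Pi_1+\Pi_0+\Pi_2$ is merely a regrouping of the paper's telescoping series $\omega_0\wedge\eta_0+\sum_n\bigl(\omega_{n+1}\wedge(\eta_{n+1}-\eta_n)+(\omega_{n+1}-\omega_n)\wedge\eta_n\bigr)$, after which Corollary~\ref{cor:LP}, the continuity bound, (A), (B) and uniqueness are handled exactly as in the paper. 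The one slip is your aside on $\alpha=\beta=1$: there the summands are only $O(1)$ in $\bCH^{m+m',1}(K)$, so the series is norm-summable only in $\bCH^{m+m'}(K)$, and the $1$-fractionality of the limit must instead be read off from the uniform $\bCH^{m+m',1}(K)$ bound on the partial sums $\omega_N\wedge\eta_N$ together with lower semicontinuity, which is how the paper treats this case.
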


\begin{proof}
  We proved in Proposition~\ref{prop:estimatesSmoothing} that $\omega
  * \Phi_\varepsilon \to \omega$ weakly-* in $\bCH^{m, \alpha}(\R^d)$,
  and similarly, $\eta * \Phi_\varepsilon \to \eta$ weakly-* in
  $\bCH^{m', \beta}(\R^d)$. Taking into account that $\omega *
  \Phi_\varepsilon$ and $\eta * \Phi_\varepsilon$ are smooth, the
  uniqueness of the map $\wedge$ follows.

  We now address the issue of existence. First we treat the case
  $(\alpha, \beta) \neq (1, 1)$. Abbreviate $\omega_n = \omega *
  \Phi_{2^{-n}}$ and $\eta_n = \eta * \Phi_{2^{-n}}$. We shall prove
  that the weak* limit of $(\omega_n \wedge \eta_n)$ exists in
  $\bCH^{m+m', \alpha + \beta - 1}(\R^d)$. This will be achieved if we
  manage to prove that the following series converges weakly-*
  \begin{equation}
    \label{eq:seriesW*}
    \omega_0 \wedge \eta_0 + \sum_{n=0}^\infty \left( \omega_{n+1}
    \wedge (\eta_{n+1} - \eta_n) + (\omega_{n+1} - \omega_n) \wedge
    \eta_n \right)
  \end{equation}
  in $\bCH^{m+m', \alpha + \beta - 1}(\R^d)$. Let us note in passing
  that both sums $\sum_{n=0}^\infty \omega_{n+1} \wedge (\eta_{n+1} -
  \eta_n)$ and $\sum_{n=0}^\infty (\omega_{n+1} - \omega_n) \wedge
  \eta_n$ can be interpreted as paraproducts.  The weak* convergence
  will follow from an application of Corollary~\ref{cor:LP}. To this
  end, we let $K$ be a closed (non degenerate) ball and estimate
  \[
  \| \eta_{n+1} - \eta_n \|_{\bCH^{m'}(K)} \leq \| \eta_{n+1} - \eta
  \|_{\bCH^{m'}(K)} + \| \eta_n - \eta \|_{\bCH^{m'}(K)} \leq C 2^{-n\beta}
  \| \eta \|_{\bCH^{m', \beta}(K_1)}
  \]
  by Proposition~\ref{prop:estimatesSmoothing} and similarly
  \[
  \| \omega_{n+1} - \omega_n \|_{\bCH^m(K)} \leq C 2^{-n\alpha}
  \|\omega\|_{\bCH^{m, \alpha}(K_1)}.
  \]
  Next we wish to control the $\bCH^{m+m'}(K)$ seminorm of the
  exterior product $\omega_{n+1} \wedge (\eta_{n+1} - \eta_n)$. Let $T
  \in \bN_{m+m'}(K)$. We recall that
  \[
  \partial (T \hel \omega_{n+1}) = (-1)^m (\partial T) \hel
  \omega_{n+1} + (-1)^{m+1} T \hel \diff \omega_{n+1}.
  \]
  Hence
  \begin{align*}
    \bN(T \hel \omega_{n+1}) & \leq C \bN(T) \max \{
    \|\omega_{n+1}\|_{K, \infty}, \| \diff \omega_{n+1} \|_{K, \infty}
    \} \\ & \leq C \bN(T) \| \omega_{n+1} \|_{\bCH^{m, 1}(K)} &
    \text{by Proposition~\ref{prop:CH1smooth}} \\ & \leq C \bN(T) \|
    \omega \|_{\bCH^{m, \alpha}(K_1)} 2^{n(1 - \alpha)} & \text{by
      Proposition~\ref{prop:estimatesSmoothing}(B)}
  \end{align*}
  We deduce that
  \begin{align*}
    \left|\omega_{n+1} \wedge (\eta_{n+1} - \eta_n) (T)\right| & =
    \left| (\eta_{n+1} - \eta_n)(T \hel \omega_{n+1}) \right| \\ &
    \leq \| \eta_{n+1} - \eta_n \|_{\bCH^{m'}(K)} \bN(T \hel
    \omega_{n+1}) \\ & \leq C \|\omega\|_{\bCH^{m,\alpha}(K_1)} \|
    \eta \|_{\bCH^{m',\beta}(K_1)} 2^{n(1 - \alpha - \beta)} \bN(T).
  \end{align*}
  As a result,
  \[
  \|\omega_{n+1} \wedge (\eta_{n+1} - \eta_n) \|_{\bCH^m(K)} \leq C
  \|\omega\|_{\bCH^{m,\alpha}(K_1)} \| \eta \|_{\bCH^{m', \beta}(K_1)}
  2^{n(1 - \alpha - \beta)}.
  \]
  Next we estimate the $\bCH^{m,1}(K)$ seminorm of $\omega_{n+1}
  \wedge (\eta_{n+1} - \eta_n)$. By
  Proposition~\ref{prop:estimatesSmoothing}(B),
  \begin{gather*}
    \|\omega_{n+1}\|_{\bCH^{m,1}(K)} \leq C 2^{n(1 - \alpha)} \| \omega
    \|_{\bCH^{m, \alpha}(K_1)} \\ \| \eta_{n+1} - \eta_n \|_{\bCH^{m',
        1}(K)} \leq \|\eta_{n+1}\|_{\bCH^{m', 1}(K_1)} + \|\eta_n
    \|_{\bCH^{m', 1}(K)} \leq C 2^{n(1 - \beta)} \| \eta \|_{\bCH^{m',
        \beta}(K_1)}
  \end{gather*}
  By Corollary~\ref{cor:411},
  \[
  \|\omega_{n+1} \wedge (\eta_{n+1} - \eta_n) \|_{\bCH^{m,1}(K)} \leq
  C \| \omega \|_{\bCH^{m,\alpha}(K_1)} \|\eta\|_{\bCH^{m',
      \beta}(K_1)} 2^{n(1 - (\alpha + \beta - 1))}.
  \]
  Similarly, we have
  \begin{gather*}
    \| (\omega_{n+1} - \omega_n) \wedge \eta_n \|_{\bCH^{m+m'}(K)}
    \leq C \|\omega\|_{\bCH^{m,\alpha}(K_1)} \| \eta
    \|_{\bCH^{m',\beta}(K_1)} 2^{n(1 - \alpha - \beta)} \\ \| (\omega_{n+1}
    - \omega_n) \wedge \eta_n \|_{\bCH^{m+m', 1}(K)} \leq C \| \omega
    \|_{\bCH^{m,\alpha}(K_1)} \|\eta\|_{\bCH^{m', \beta}(K_1)} 2^{n(1
      - (\alpha + \beta - 1))}
  \end{gather*}
  Thus the series in~\eqref{eq:seriesW*} converges weakly-* and we
  naturally define $\omega \wedge \eta$ to be the weak* limit of
  $(\omega_n \wedge \eta_n)$. By Proposition~\ref{prop:chLP}, we have
  \begin{equation}
    \label{eq:wedgeCont}
  \| \omega \wedge \eta \|_{\bCH^{m, \alpha + \beta - 1}(K)} \leq C \|
  \omega \|_{\bCH^{m,\alpha}(K_1)} \| \eta \|_{\bCH^{m', \beta}(K_1)}.
  \end{equation}
  Thus, $\wedge$ is continuous, as desired.

  Now we need to prove that (A) and (B) hold. (A) is easy, for if
  $\omega$ and $\eta$ are already smooth, then $(\omega_n \wedge
  \eta_n)$ converges locally uniformly (and thus weakly) to the
  pointwise exterior product of $\omega$ and $\eta$. Then the weak*
  and weak limits coincide, so we can conclude that $\omega \wedge
  \eta$ has its natural meaning.

  Finally we prove (B). Let $(\omega^{(p)})$ and $(\eta^{(p)})$ two
  sequences, indexed by $p \geq 0$, that converge weakly-* towards
  $\omega \in \bCH^{m, \alpha}(\R^d)$ and $\eta \in \bCH^{m',
    \beta}(\R^d)$ as $p \to \infty$. As before, we fix a closed ball
  $K$ and we set $\omega_n^{(p)} = \omega^{(p)} * \Phi_{2^{-n}}$ and
  $\eta_n^{(p)} = \eta^{(p)} * \Phi_{2^{-n}}$ for all $n,
  p$. Inequality~\eqref{eq:wedgeCont}, applied to $\omega^{(p)}$ and
  $\eta^{(p)}$, show that the sequence $(\omega^{(p)} \wedge
  \eta^{(p)})$ is bounded in $\bCH^{m+m', \alpha + \beta - 1}(\R^d)$.

  Proposition~\ref{prop:smooth} entails that, for all integer $n$, the
  smooth forms $\omega_n^{(p)}$ and $\eta_n^{(p)}$ converge locally
  uniformly to $\omega_n$ and $\eta_n$ as $p \to \infty$. Thus, for a
  fixed normal current $T$ with support in a non degenerate closed
  ball, we have
  \begin{multline*}
  T\left( \omega_{n+1} \wedge (\eta_{n+1} - \eta_n) + (\omega_{n+1} -
  \omega_n) \wedge \eta_n \right) = \\
  \lim_{p \to \infty} T\left(
  \omega_{n+1}^{(p)} \wedge (\eta_{n+1}^{(p)} - \eta_n^{(p)}) +
  (\omega_{n+1}^{(p)} - \omega_n^{(p)}) \wedge \eta_n^{(p)} \right).
  \end{multline*}
  Likewise,
  \[
  T(\omega_0 \wedge \eta_0) = \lim_{p \to \infty} T(\omega_0^{(p)}
  \wedge \eta_0^{(p)}).
  \]
  Arguing as before, one has
  \begin{multline*}
  \left| T \left( \omega_{n+1}^{(p)} \wedge (\eta_{n+1}^{(p)} -
  \eta_n^{(p)}) + (\omega_{n+1}^{(p)} - \omega_n^{(p)}) \wedge
  \eta_n^{(p)} \right) \right| \\ \leq C \|\omega^{(p)}\|_{\bCH^{m,
      \alpha}(K_1)} \| \eta^{(p)} \|_{\bCH^{m', \beta}(K_1)} 2^{n(1 -
    \alpha - \beta)} \bN(T).
  \end{multline*}
  As the sequences $(\omega^{(p)})$ and $(\eta^{(p)})$ are bounded,
  the previous bound can be made uniform in $p$, allowing us to apply
  Lebesgue's dominated convergence theorem in
  \begin{multline*}
    \omega^{(p)} \wedge \eta^{(p)}(T) \\
    \begin{aligned}
      & = \lim_{p \to \infty} \left(T(\omega_0^{(p)} \wedge
      \eta_0^{(p)}) + \sum_{n=0}^\infty T\left( \omega_{n+1}^{(p)}
      \wedge (\eta_{n+1}^{(p)} - \eta_n^{(p)}) + (\omega_{n+1}^{(p)} -
      \omega_n^{(p)}) \wedge \eta_n^{(p)} \right) \right)\\ & = \omega
      \wedge \eta(T)
    \end{aligned}
  \end{multline*}

  The case $\alpha = \beta = 1$, though simpler, requires special
  attention. The uniqueness of $\wedge$ is already established. As
  previously, we define $\omega \wedge \eta$ to be the weak* limit
  $\lim_n \omega_n \wedge \eta_n$. It exists because of the
  compactness theorem. Indeed, for a suitable $K$, we have, by
  Corollary~\ref{cor:411} and
  Proposition~\ref{prop:estimatesSmoothing}(A)
  (or~\ref{prop:estimatesSmoothing}(B))
  \begin{align*}
    \| \omega_n \wedge \eta_n \|_{\bCH^{m+m', 1}(K)} & \leq C
    \|\omega_n\|_{\bCH^{m,1}(K)} \| \eta_n \|_{\bCH^{m',1}(K)} \\ &
    \leq C \| \omega \|_{\bCH^{m,1}(K_1)} \| \eta \|_{\bCH^{m',
        1}(K_1)}
  \end{align*}
  It remains to show that $(\omega_n \wedge \eta_n)$ converges
  weakly. This is the case because, for $T \in \bN_m(K)$,
  \begin{equation}
    \label{eq:cas11}
  T(\omega_N \wedge \eta_N) = T(\omega_0 \wedge \eta_0) +
  \sum_{n=0}^{N-1} T\left( \omega_{n+1} \wedge (\eta_{n+1} - \eta_n) +
  (\omega_{n+1} - \omega_n) \wedge \eta_n \right)
  \end{equation}
  As before, we establish
  \[
  \left| T\left( \omega_{n+1} \wedge (\eta_{n+1} - \eta_n) +
  (\omega_{n+1} - \omega_n) \wedge \eta_n \right) \right| \leq \frac{C
    \| \omega \|_{\bCH^{m, 1}(K_1) } \| \eta \|_{\bCH^{m', 1}(K_1)}
  }{2^n}
  \]
  which ensures the absolute convergence of the series in~\eqref{eq:cas11}.

  Now that the exterior product is well-defined as a map
  $\bCH^{m,1}(\R^d) \times \bCH^{m', 1}(\R^d) \to \bCH^{m+m',
    1}(\R^d)$, properties (A) and (B) are shown as before.
\end{proof}

\begin{Empty}[Properties of $\wedge$]
  Using the weak* density of smooth forms, it is easy to extend the
  well-known formulae of exterior calculus to fractional
  charges. Among them, we have, for $\omega \in \bCH^{m,
    \alpha}(\R^d)$, $\eta \in \bCH^{m', \beta}(\R^d)$ and $\alpha +
  \beta > 1$,
  \begin{itemize}
  \item[(A)] $\wedge$ is bilinear;
  \item[(B)] $\omega \wedge \eta = (-1)^{mm'} \eta \wedge \omega$;
  \item[(C)] $\diff (\omega \wedge \eta) = \diff \omega \wedge \eta +
    (-1)^m \omega \wedge \diff \eta$.
  \end{itemize}
  The last item uses the weak*-to-weak* continuity of the exterior
  derivative.

  Regarding the product of many fractional charges, we notice that
  $\omega_1 \wedge \cdots \wedge \omega_k$ makes sense (and the
  product is associative) as long as $\omega_i$ is
  $\alpha_i$-fractional and $\alpha_1 + \cdots + \alpha_k > k-1$. In
  this case, the result is an $(\alpha_1 + \cdots + \alpha_k -
  (k-1))$-fractional charge.
\end{Empty}

\bibliographystyle{amsplain} \bibliography{phil.bib}

\providecommand{\bysame}{\leavevmode\hbox to3em{\hrulefill}\thinspace}
\providecommand{\MR}{\relax\ifhmode\unskip\space\fi MR }
\providecommand{\MRhref}[2]{%
  \href{http://www.ams.org/mathscinet-getitem?mr=#1}{#2}
}
\providecommand{\href}[2]{#2}
\begin{thebibliography}{1}

\bibitem{Boua}
Ph. Bouafia, \emph{{Y}oung integration with respect to {H}ölder charges},
  submitted.

\bibitem{BouaDePa}
Ph. Bouafia and Th. De~Pauw, \emph{A regularity property of fractional
  {B}rownian sheets}, submitted.

\bibitem{DePaMoonPfef}
Th. De~Pauw, L.~Moonens, and W.~F. Pfeffer, \emph{Charges in middle
  dimensions}, J. Math. Pures Appl. \textbf{92} (2009), 86--112.

\bibitem{Fede}
H.~Federer, \emph{Geometric measure theory}, Springer-Verlag, 1969.

\bibitem{FedeReal}
\bysame, \emph{Real {F}lat {C}hains, {C}ochains and {V}ariational {P}roblems},
  Indiana Univ. Math. J. \textbf{24} (1974), 351--407.

\bibitem{Grom}
M.~Gromov, \emph{Metric structures for {R}iemannian and non-{R}iemannian
  spaces}, Birkhäuser, 2001.

\bibitem{GubiImkePerk}
M.~Gubinelli, P.~Imkeller, and N.~Perkowski, \emph{A {F}ourier analytic
  approach to pathwise stochastic integration}, Electron. J. Probab.
  \textbf{21} (2016), 1--37.

\bibitem{Whit}
H.~Whitney, \emph{{G}eometric {I}ntegration {T}heory}, Princeton Univ. Press,
  Princeton, 1957.

\bibitem{Zust}
R.~Züst, \emph{Integration of {Hölder} forms and currents in snowflake
  spaces}, Calc. Var. \textbf{40} (2011), 99--124.

\end{thebibliography}

\end{document}